\documentclass[11pt]{article}
\usepackage[tbtags]{amsmath}
\usepackage{amssymb}
\usepackage{amsthm}
\usepackage[misc]{ifsym}
\usepackage{cases}
\usepackage{mathrsfs}
\usepackage{graphicx}
\usepackage{float}
\usepackage[colorlinks,
linkcolor=blue,
anchorcolor=blue,
citecolor=blue]{hyperref}
\usepackage{relsize}
\numberwithin{equation}{section}   
\setlength{\textwidth}{160mm} \setlength{\textheight}{218mm}
\oddsidemargin=2 mm \topskip 0.5cm \topmargin=-0.5in
\normalsize
\title{\bf A Verification Theorem for Stackelberg Stochastic Differential Games in Feedback Information Pattern\thanks{This work is financially supported by National Key R\&D Program of China (Grant No. 2018YFB1305400), National Natural Science Foundations of China (Grant Nos. 11971266, 11831010, 11571205), and Shandong Provincial Natural Science Foundations (Grant Nos. ZR2020ZD24, ZR2019ZD42).}}
\author{\normalsize Qi Huang\thanks{\it School of Mathematics, Shandong University, Jinan 250100, P.R. China, E-mail: 201911814@mail.sdu.edu.cn},\quad Jingtao Shi\thanks{Corresponding author, \it School of Mathematics, Shandong University, Jinan 250100, P.R. China, E-mail: shijingtao@sdu.edu.cn}}

\newtheorem{mythm}{Theorem}[section]
\newtheorem{mydef}{Definition}[section]

\newtheorem{myrem}{Remark}[section]

\begin{document}
\maketitle

\noindent{\bf Abstract:}\quad This paper is concerned with a Stackelberg stochastic differential game on a finite horizon in feedback information pattern. A system of parabolic partial differential equations is obtained at the level of Hamiltonian to give the verification theorem of the feedback Stackelberg equilibrium. As an example, a linear quadratic Stackelberg stochastic differential game is investigated. Riccati equations are introduced to express the feedback Stackelberg equilibrium, analytical and numerical solutions to these Riccati equations are discussed in some special cases.

\vspace{2mm}

\noindent{\bf Keywords:}\quad Stackelberg stochastic differential game, verification theorem, feedback Stackelberg equilibrium, HJB equation, Riccati equation

\vspace{2mm}

\section{Introduction}

\indent The Stackelberg solution concept was first introduced by Von Stackelberg in \cite{St1934}. This solution concept arises in two-person nonzero-sum static games with asymmetrical modes of play. One of the players is called the leader, and the other is called the follower. The leader has the ability to announce his policy first, leaving to the follower to react. The follower optimizes his cost functional given the strategy that the leader has announced. Anticipating the follower's response, the leader choose the policy which will minimize his cost functional under the follower's rational response.

\indent The equilibrium concept was first extended to open-loop Stackelberg equilibrium for dynamic games. The leader announces the policy which he is going to take for the rest of the game. And taking this policy as given, the follower choose his policy to minimize his cost functional. However, as we all know, this equilibrium can be time inconsistent. So the focus of research shifted to finding feedback Stackelberg equilibrium which was first introduced by Simman and Cruz in multi-period games in \cite{SC1973-1,SC1973-2}. In this kind of equilibrium, the leader merely has a stagewise first-mover advantage over the follower. The players' feedback strategies depend on the observed pair (date, state), such that at any (date, state) pair, the continuation of optimal strategy remains optimal for the players.

\indent The feedback Stackelberg equilibrium was originally defined by Ba\c{s}ar and Hauire in \cite{BH1984}. The leader has a first-mover advantage over the follower at every stage of the game, which means that the leader has an instantaneous advantage at every point in time. As shown in \cite{BH1984}, the continuous-time problem can be regarded as the limit of a series of discrete-time games which is the set of pointwise Stackelberg solutions to coupled {\it Hamilton-Jacobi-Bellman} (HJB, in short) equations. To compute the feedback Stackelberg equilibrium, the follower's pointwise best response to the leader's policy is computed firstly. Secondly, according to the follower's pointwise best response, the leader solves his optimization problem. Applying backward induction, we need to substitute the follower's instantaneous reaction function into the leader's HJB equation and find the leader's optimal feedback strategy by maximizing the right-hand side of the equation. For this equilibrium, there is no need to assume any responsibility over the entire time horizon, only a periodic first-mover advantage. This equilibrium is subgame perfect and time consistent. On the other hand, if the leader announces his policy for the rest of the game at the initial time, the follower minimizes his cost functional under this strategy. The leader has global advantage over the follower. Derivation of global Stackelberg solution is still an active area of research (Mart{\'\i}n-Herr{\'a}n and Rubio \cite{martin2021coincidence}). See Ba\c{s}ar and Olsder \cite{BO1998} for more types of Stackelberg solutions and their connections, and see He et al. \cite{HPS2009}, Chen and Shen \cite{CS2018}, Shi et al. \cite{SWX2016}, Zheng and Shi \cite{zheng2021stackelberg} for some recent progress in Stackelberg stochastic differential games.

\indent \cite{BH1984} derives a coupled system of HJB equations which are parabolic {\it partial differential equations} (PDEs, in short), to characterize the feedback Stackelberg equilibrium. As a special case, they consider a {\it linear-quadratic} (LQ, in short) game, deduce the associated Riccati equation, and give the existence of the solution to it within a sufficiently small horizon. The uncertainty in \cite{BH1984} comes from a finite-state stochastic jump process. In contrast to \cite{BH1984}, Bensoussan, Chen and Sethi in \cite{BCS2014} consider an infinite-horizon Stackelberg stochastic differential game in which the uncertainty comes from a standard Brownian motion. They obtain a sufficient condition for the feedback Stackelberg equilibrium and apply it to the LQ case. Due to the infinite horizon nature, the HJB equations in this case are elliptic PDEs.

\indent In this paper, we consider a finite-horizon Stackelberg stochastic differential game involving a Brownian motion. Different from \cite{BCS2014}, both the drift term and the diffusion term of the state equation in this paper contain the leader's and the follower's control variables. A verification theorem in feedback information pattern is first obtained. We use a system of parabolic PDEs obtained from the Stackelberg game at the level of Hamiltonian to get the sufficient condition for the feedback Stackelberg equilibrium. Compared with \cite{BCS2014}, the Hamiltonian functions of the leader and the follower in our case become more complex. And we apply the verification theorem to the special LQ case. In this case, the state equation is as follows:
\begin{equation*}
\left\{
\begin{aligned}
dx(s)&=\big[A(s)x(s)+B_1(s)u(s)+B_2(s)v(s)+b(s)\big]ds\\
     &\quad+\big[C(s)x(s)+D_1(s)u(s)+D_2(s)v(s)+\lambda(s)\big]dW(s),\quad s \in [t,T],\\
 x(t)&=x,
\end{aligned}
\right.
\end{equation*}
where $(t,x)$ is the initial time and state pair, $A(\cdot), B_1(\cdot), B_2(\cdot), C(\cdot), D_1(\cdot), D_2(\cdot), b(\cdot), \lambda(\cdot)$ are given matrix-valued deterministic functions, and $W(\cdot)$ is a one-dimensional Brownian motion. And the cost functionals are:
\begin{equation*}
\begin{aligned}
&J_i(t,x;u(\cdot),v(\cdot))=\frac{1}{2}\mathbb E\Bigg\{\mathlarger{\int^T_t} \left[\left\langle\left( \begin{array}{ccc}
Q_i(s) & {M_{i1}(s)}^\top & {M_{i2}(s)}^\top\\
M_{i1}(s) & R_{i11}(s) & R_{i12}(s)\\
M_{i2}(s) & R_{i21}(s) & R_{i22}(s)\\
\end{array} \right)
\left( \begin{array}{ccc}
x(s)\\
u(s)\\
v(s)\\
\end{array} \right),
\left( \begin{array}{ccc}
x(s)\\
u(s)\\
v(s)\\
\end{array} \right)\right\rangle\right.\\
&\qquad\qquad+2\left.\left\langle \left( \begin{array}{ccc}
q_i(s)\\
\rho_{i1}(s)\\
\rho_{i2}(s)\\
\end{array} \right),
\left( \begin{array}{ccc}
x(s)\\
u(s)\\
v(s)\\
\end{array} \right)
\right\rangle
\right]ds+\langle L_i x(T), x(T)\rangle+2\langle N_i, x(T)\rangle\Bigg\},
\end{aligned}
\end{equation*}
where $Q_i(\cdot), M_{i1}(\cdot), M_{i2}(\cdot), R_{i11}(\cdot), R_{i12}(\cdot), R_{i21}(\cdot), R_{i22}(\cdot), q_i(\cdot), \rho_{i1}(\cdot), \rho_{i2}(\cdot)$ are given matrix-\\valued deterministic functions, $L_i$ is an $n \times n$ symmetric matrix and $N_i \in \mathbb{R}^n$, for $i=1,2$.

Noting that the diffusion term of the state equation contains the leader's and the follower's control variables, and the form of cost functionals of the leader and the follower are very general. We derive the corresponding system of HJB equations in the verification theorem. We then consider two special LQ cases to get the corresponding representations of the feedback Stackelberg equilibrium, via some Riccati equations. The solvability of them is discussed as well.

\indent The rest of this paper is organized as follows. Section \ref{Section 2} gives the formulation of the Stackelberg stochastic differential game and give the definition of feedback Stackelberg equilibrium. Section \ref{A} is devoted to the verification theorem of the feedback Stackelberg equilibrium. In Section \ref{Section 4}, an LQ case is researched. We use Riccati equations to express the feedback Stackelberg equilibrium, and discuss the analytical and numerical solutions to them in some special cases. Finally, in Section \ref{Section 5}, some concluding remarks are given.

\section{Problem Formulation}\label{Section 2}

\noindent For given $T>0$ and initial data $(t,x)\in [0,T]\times \mathbb R^n$, a Stackelberg stochastic differential game is considered. The state equation is
\begin{equation}\label{state SDE}
\left\{
\begin{aligned}
dx(s)&=f(s,x(s),u(s),v(s))ds+\sigma(s,x(s),u(s),v(s))dW(s),\quad 0\le t \le s \le T, \\
 x(t)&=x,
\end{aligned}
\right.
\end{equation}
where $(\Omega,\mathcal{F},\{\mathcal{F}_t\}_{t\ge 0},\mathbb{P})$ is a filtered probability space, and $W(\cdot)$ is a $d$-dimensional standard Brownian motion defined on it.
$f:[t,T]\times \mathbb{R}^n \times \mathbb{R}^{m_1}\times \mathbb{R}^{m_2} \to \mathbb{R}^n$ and $\sigma :[t,T]\times \mathbb{R}^n \times \mathbb{R}^{m_1}\times \mathbb{R}^{m_2} \to \mathbb{R}^{n\times d}$
are measurable functions. There exists a constant $C>0$ such that
\[|f(s,x,u,v)-f(s,x',u',v')|+|\sigma(s,x,u,v)-\sigma(s,x',u',v')| \le C[|x-x'|+|u-u'|+|v-v'|],\]
\[\forall s \in [t,T], x, x' \in \mathbb{R}^n, u, u' \in \mathbb{R}^{m_1}, v, v' \in \mathbb{R}^{m_2}.\]
And for some $p \in [2, \infty)$, $\left(\int_t^T |f(s,0,0,0)| ds\right)^p+\left(\int_t^T |\sigma(s,0,0,0)|^2 ds\right)^{\frac{p}{2}} < \infty$.

$u(\cdot)$ and $v(\cdot)$ are control processes of two players: the leader (player 1) and the follower (player 2), respectively.
The cost functionals for the leader and the follower are of the form:
\begin{equation}\label{cost functional-leader}
\begin{aligned}
J_1(t,x;u(\cdot),v(\cdot))=\mathbb E\left[\int^T_t g_1(s,x(s),u(s),v(s))ds+h_1(x(T))\right],
\end{aligned}
\end{equation}
\begin{equation}\label{cost functional-follower}
\begin{aligned}
J_2(t,x;u(\cdot),v(\cdot))=\mathbb E\left[\int^T_t g_2(s,x(s),u(s),v(s))ds+h_2(x(T))\right],
\end{aligned}
\end{equation}
where $g_i:[t,T]\times \mathbb{R}^n \times \mathbb{R}^{m_1}\times \mathbb{R}^{m_2} \to \mathbb{R}$ and $h_i: \mathbb{R}^n \to \mathbb{R}$ ($i=1,2$)
are measurable functions which satisfy the following polynomial growth conditions: For $i=1,2$,
\begin{equation*}
\begin{aligned}
&|g_i(s,x,u,v)|\le C(1+|x|^p+|u|^p+|v|^p),\quad |h_i(x)|\le C(1+|x|^p),\\
&\hspace{2cm} \quad \forall (s,x,u,v)\in [t,T]\times \mathbb{R}^n \times \mathbb{R}^{m_1}\times \mathbb{R}^{m_2},
\end{aligned}
\end{equation*}
for some positive constants $C$ and $p$ mentioned above.

In our Stackelberg stochastic differential game in feedback information pattern, the leader determines his instantaneous strategy of the form $u(s,x(\cdot)), s\in[t,T]$. And according to the observed state $x(\cdot)$ and the leader's instantaneous strategy as the game progress, the follower makes his instantaneous decision $v(s,x(\cdot),u(s,x(\cdot))), s\in[t,T]$. So the admissible strategy spaces for the leader and the follower are as follows:
\begin{equation*}
\mathcal{U}[0,T]=\Big\{u(\cdot,\cdot)\in L^p_{\mathcal{F}} (t,T;{\mathbb{R}}^{m_1})\big|u:[t,T]\times {\mathbb{R}}^n \to U, u(s,x) \textrm{ is Lipschitz continuous in } (s,x)\Big\},
\end{equation*}
\begin{equation*}
\begin{aligned}
\mathcal{V}[0,T]&=\Big\{v(\cdot,\cdot,\cdot)\in L^p_{\mathcal{F}} (t,T;{\mathbb{R}}^{m_2})\big|v:[t,T]\times {\mathbb{R}}^n \times U \to V, v(s,x,u)\textrm{ is Lipschitz continuous}\\
                &\qquad\qquad\qquad\qquad\qquad\qquad\quad\textrm{in }(s,x,u)\Big\},
\end{aligned}
\end{equation*}
where $U$ and $V$ are given subsets in ${\mathbb{R}}^{m_1}$ and ${\mathbb{R}}^{m_2},$ respectively. And $L^p_{\mathcal{F}} (t,T;{\mathbb{R}}^{k})$ are the set of all $\{{\mathcal{F}}_t\}_{t\ge 0}$- adapted ${\mathbb{R}}^{k}$-valued processes $X(\cdot)$ such that $\mathbb{E} \int_t^T |X(s)|^p ds < \infty$, $k=m_1, m_2.$

For a pair of strategies $(u(\cdot,\cdot),v(\cdot,\cdot,u(\cdot,\cdot)))\in \mathcal{U}[0,T]\times \mathcal{V}[0,T]$, we use $x^{t,x}(\cdot;u,v)$ to denote the solution to the parameterized state equation
\begin{equation}\label{parameterized state SDE}
\left\{
\begin{aligned}
dx(s)&=f\big(s,x(s),u(s,x(s)),v(s,x(s),u(s,x(s)))\big)ds\\
     &\quad+\sigma\big(s,x(s),u(s,x(s)),v(s,x(s),u(s,x(s)))\big)dW(s),\quad 0\le t \le s \le T, \\
 x(t)&=x.
\end{aligned}
\right.
\end{equation}
And for $i=1,2$, we use $J^{t,x}_i (u(\cdot,\cdot),v(\cdot,\cdot,u(\cdot,\cdot)))$ to represent the corresponding cost functional of player $i$:
\begin{equation}\label{parameterized cost functionals}
\begin{split}
&J^{t,x}_i (u(\cdot,\cdot),v(\cdot,\cdot,u(\cdot,\cdot)))\\
&=\mathbb E\bigg\{\int^T_t g_i\big(s,x^{t,x}(s;u,v),u(s,x^{t,x}(s;u,v)),v(s,x^{t,x}(s;u,v),u(s,x^{t,x}(s;u,v)))\big)ds\\
&\qquad\quad+h_i(x^{t,x}(T;u,v))\bigg\}.
\end{split}
\end{equation}

\begin{mydef}
If the following holds:
\begin{equation}\label{feedback Stackelberg equilibrium}
\left\{
\begin{aligned}
&J^{t,x}_1 (u^*(\cdot,\cdot),v^*(\cdot,\cdot,u^*(\cdot,\cdot)))\le J^{t,x}_1 (u(\cdot,\cdot),v^*(\cdot,\cdot,u(\cdot,\cdot))),\\
&\hspace{4cm}\forall u(\cdot,\cdot)\in \mathcal{U}[0,T],\quad \forall (t,x)\in [0,T]\times \mathbb{R}^n,\\
&J^{t,x}_2 (u^*(\cdot,\cdot),v^*(\cdot,\cdot,u^*(\cdot,\cdot)))\le J^{t,x}_2 (u^*(\cdot,\cdot),v(\cdot,\cdot,u^*(\cdot,\cdot))),\\
&\hspace{4cm}\forall v(\cdot,\cdot,\cdot)\in \mathcal{V}[0,T],\quad \forall (t,x)\in [0,T]\times \mathbb{R}^n,
\end{aligned}
\right.
\end{equation}
\noindent we call the pair of strategies $(u^*(\cdot,\cdot),v^*(\cdot,\cdot,u^*(\cdot,\cdot)))\in \mathcal{U}[0,T]\times \mathcal{V}[0,T]$ is a feedback Stackelberg equilibrium.
\end{mydef}
\begin{myrem}
This definition seems very similar to the definition of the feedback Nash equilibrium. In fact, they are different. Because in the feedback Stackelberg equilibrium, the strategy of the follower is influenced by the leader's strategy. However, in the feedback Nash equilibrium, each player has equal roles and status. On the other hand, in the feedback Stackelberg equilibrium, the leader merely has an instantaneous advantage over the follower at every point in time, and players' feedback strategies depend on the observed time and state. Therefore, there is no need to assume any responsibility over the entire time horizon. This equilibrium is subgame perfect and time consistent. See more detail in \cite{BCS2014}.
\end{myrem}

\section{Verification Theorem}\label{A}

In this section, we will give a verification theorem, which provides a sufficient condition for the feedback Stackelberg equilibrium.
Let $\mathcal{S}^n$ denote the set of symmetric $n \times n$ matrices $A=(A_{ij}), i,j=1, \cdots, n.$ Let $a=\sigma {\sigma}^\top$ and $\mathrm{tr}[aA]=\sum\limits_{i,j=1}^n a_{ij}A_{ij}.$
We introduce the Hamiltonian functions for the leader and the follower as follows:
\[H_1(s,x,\mu,\nu,p_1,A'):=\langle p_1,f(s,x,\mu,\nu)\rangle+\frac{1}{2}\mathrm{tr}[a(s,x,\mu,\nu)A']+g_1(s,x,\mu,\nu),\]
\[H_2(s,x,\mu,\nu,p_2,A''):=\langle p_2,f(s,x,\mu,\nu)\rangle+\frac{1}{2}\mathrm{tr}[a(s,x,\mu,\nu)A'']+g_2(s,x,\mu,\nu),\]
where $H_i:[t,T]\times \mathbb{R}^n \times \mathbb{R}^{m_1}\times \mathbb{R}^{m_2}\times \mathbb{R}^n \times \mathcal{S}^n \to \mathbb{R}$, for $i=1,2$.

For every $(s,x,\mu,p_2,A'')$, suppose $H_2$ is strictly convex in $\nu$.
Therefore, the follower has a unique optimal response function for the leader's each policy $\mu \in U$:
\[T_2(s,x,\mu,p_2,A''):=\mathop{argmin}\limits_{\nu \in\, V} H_2(s,x,\mu,\nu,p_2,A'').\]
Under the follower's optimal response $T_2$, the leader should take a strategy to minimize his Hamiltonian functional $H_1(s,x,\mu,T_2(s,x,\mu,p_2,A''),p_1,A')$. We further assume that it is also strictly convex in $\mu$, for every $(s,x,p_1,p_2,A',A'')$. So the leader's optimal action is
\[T_1(s,x,p_1,p_2,A',A''):=\mathop{argmin}\limits_{\mu \in\, U} H_1(s,x,\mu,T_2(s,x,\mu,p_2,A''),p_1,A').\]
Then we obtain a feedback Stackelberg equilibrium
\[\big(T_1(s,x,p_1,p_2,A',A''), T_2(s,x,T_1(s,x,p_1,p_2,A',A''),p_2,A'')\big).\]

Let $C_p([t,T]\times \mathbb{R}^n)$ denote the set of all continuous functions $\Phi(s,x)$ on $[t,T]\times \mathbb{R}^n$ satisfying
a polynomial growth condition
\[|\Phi (s,x)| \le C(1+|x|^p)\]
for some positive constant $C$ and $p$ mentioned above. Let $C^{1,2}([t,T]\times \mathbb{R}^n)$ denote the set of all continuous functions $\Phi(s,x)$ on $[t,T]\times \mathbb{R}^n$ with continuous partial derivative in $s$ and 2-order continuous derivative in $x$.

With these notations, we have the following verification theorem.
\begin{mythm}
Suppose $V_1(s,x)$, $V_2(s,x)$ both lie in $C^{1,2}([t,T]\times \mathbb{R}^n) \cap C_p([t,T]\times \mathbb{R}^n)$ and solve the system of parabolic PDEs
\begin{equation}\label{PDE-1}
\left\{
\begin{aligned}
&\frac{\partial V_1}{\partial s}(s,x)+\bigg\langle\frac{\partial V_1}{\partial x}(s,x),f\bigg(s,x,T_1\Big(s,x,\frac{\partial V_1}{\partial x},\frac{\partial V_2}{\partial x},\frac{{\partial}^2 V_1}{\partial x^2},\frac{{\partial}^2 V_2}{\partial x^2}\Big),\\
&\qquad T_2\Big(s,x,T_1\Big(s,x,\frac{\partial V_1}{\partial x},\frac{\partial V_2}{\partial x},\frac{{\partial}^2 V_1}{\partial x^2},\frac{{\partial}^2 V_2}{\partial x^2}\Big),\frac{\partial V_2}{\partial x},\frac{{\partial}^2 V_2}{\partial x^2}\Big)\bigg)\bigg\rangle\\
&+\frac{1}{2} \sum\limits_{i,j=1}^n \Bigg[a_{ij}\bigg(s,x,T_1\Big(s,x,\frac{\partial V_1}{\partial x},\frac{\partial V_2}{\partial x},\frac{{\partial}^2 V_1}{\partial x^2},\frac{{\partial}^2 V_2}{\partial x^2}\Big),\\
&\qquad T_2\Big(s,x,T_1\Big(s,x,\frac{\partial V_1}{\partial x},\frac{\partial V_2}{\partial x},\frac{{\partial}^2 V_1}{\partial x^2},\frac{{\partial}^2 V_2}{\partial x^2}\Big),\frac{\partial V_2}{\partial x},\frac{{\partial}^2 V_2}{\partial x^2}\Big)\bigg)\cdot \frac{{\partial}^2 V_1}{\partial x_i \partial x_j}(s,x)\Bigg]\\
&+g_1\bigg(s,x,T_1\Big(s,x,\frac{\partial V_1}{\partial x},\frac{\partial V_2}{\partial x},\frac{{\partial}^2 V_1}{\partial x^2},\frac{{\partial}^2 V_2}{\partial x^2}\Big),\\
&\qquad T_2\Big(s,x,T_1\Big(s,x,\frac{\partial V_1}{\partial x},\frac{\partial V_2}{\partial x},\frac{{\partial}^2 V_1}{\partial x^2},\frac{{\partial}^2 V_2}{\partial x^2}\Big),\frac{\partial V_2}{\partial x},\frac{{\partial}^2 V_2}{\partial x^2}\Big)\bigg)=0, \quad s \in [t,T],\\
&V_1(T,x)=h_1(x),
\end{aligned}
\right.
\end{equation}
\begin{equation}\label{PDE-2}
\left\{
\begin{aligned}
&\frac{\partial V_2}{\partial s}(s,x)+\bigg\langle\frac{\partial V_2}{\partial x}(s,x),f\bigg(s,x,T_1\Big(s,x,\frac{\partial V_1}{\partial x},\frac{\partial V_2}{\partial x},\frac{{\partial}^2 V_1}{\partial x^2},\frac{{\partial}^2 V_2}{\partial x^2}\Big),\\
&\qquad T_2\Big(s,x,T_1\Big(s,x,\frac{\partial V_1}{\partial x},\frac{\partial V_2}{\partial x},\frac{{\partial}^2 V_1}{\partial x^2},\frac{{\partial}^2 V_2}{\partial x^2}\Big),\frac{\partial V_2}{\partial x},\frac{{\partial}^2 V_2}{\partial x^2}\Big)\bigg)\bigg\rangle\\
&+\frac{1}{2} \sum\limits_{i,j=1}^n \Bigg[a_{ij}\bigg(s,x,T_1\Big(s,x,\frac{\partial V_1}{\partial x},\frac{\partial V_2}{\partial x},\frac{{\partial}^2 V_1}{\partial x^2},\frac{{\partial}^2 V_2}{\partial x^2}\Big),\\
&\qquad T_2\Big(s,x,T_1\Big(s,x,\frac{\partial V_1}{\partial x},\frac{\partial V_2}{\partial x},\frac{{\partial}^2 V_1}{\partial x^2},\frac{{\partial}^2 V_2}{\partial x^2}\Big),\frac{\partial V_2}{\partial x},\frac{{\partial}^2 V_2}{\partial x^2}\Big)\bigg)\cdot \frac{{\partial}^2 V_2}{\partial x_i \partial x_j}(s,x)\Bigg]\\
&+g_2\bigg(s,x,T_1\Big(s,x,\frac{\partial V_1}{\partial x},\frac{\partial V_2}{\partial x},\frac{{\partial}^2 V_1}{\partial x^2},\frac{{\partial}^2 V_2}{\partial x^2}\Big),\\
&\qquad T_2\Big(s,x,T_1\Big(s,x,\frac{\partial V_1}{\partial x},\frac{\partial V_2}{\partial x},\frac{{\partial}^2 V_1}{\partial x^2},\frac{{\partial}^2 V_2}{\partial x^2}\Big),\frac{\partial V_2}{\partial x},\frac{{\partial}^2 V_2}{\partial x^2}\Big)\bigg)=0, \quad s \in [t,T],\\
&V_2(T,x)=h_2(x),
\end{aligned}
\right.
\end{equation}
where $a_{ij}:=\sum\limits_{k=1}^d \sigma_{ik}\sigma_{jk}$. If we set
\[u^*(s,x):=T_1\Big(s,x,\frac{\partial V_1}{\partial x},\frac{\partial V_2}{\partial x},\frac{{\partial}^2 V_1}{\partial x^2},\frac{{\partial}^2 V_2}{\partial x^2}\Big) \quad \textrm{and} \quad v^*(s,x,\mu):=T_2\Big(s,x,\mu,\frac{\partial V_2}{\partial x},\frac{{\partial}^2 V_2}{\partial x^2}\Big),\]
then $(u^*(\cdot,\cdot),v^*(\cdot,\cdot,u^*(\cdot,\cdot)))$ is a feedback Stackelberg equilibrium.
\end{mythm}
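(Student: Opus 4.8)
The plan is to establish the two defining inequalities of Definition 2.1 separately, each by the classical verification argument resting on It\^o's formula together with the pointwise Hamiltonian optimality built into $T_1$ and $T_2$. The first thing I would record is that, since the control arguments appearing throughout \eqref{PDE-1}--\eqref{PDE-2} are precisely $u^*=T_1(\cdots)$ and $v^*(\cdot,\cdot,u^*)=T_2(s,x,T_1(\cdots),\cdots)$, the two PDEs can be written compactly as $\frac{\partial V_i}{\partial s}(s,x)+H_i\big(s,x,u^*(s,x),v^*(s,x,u^*(s,x)),\frac{\partial V_i}{\partial x},\frac{\partial^2 V_i}{\partial x^2}\big)=0$ with terminal data $V_i(T,x)=h_i(x)$, for $i=1,2$. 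By the definitions of $T_2$ and $T_1$ these identities say exactly that, along the equilibrium, $\frac{\partial V_i}{\partial s}$ equals minus the minimized Hamiltonian of player $i$.

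I would treat the follower first. Fix the leader at $u^*$, take an arbitrary $v(\cdot,\cdot,\cdot)\in\mathcal V[0,T]$, write $x(\cdot)=x^{t,x}(\cdot;u^*,v)$ and $\tilde v(s):=v(s,x(s),u^*(s,x(s)))$. Applying It\^o's formula to $s\mapsto V_2(s,x(s))$, using $V_2(T,\cdot)=h_2$, and adding $\mathbb E\int_t^T g_2\,ds$ to both sides would give
\begin{equation*}
J_2^{t,x}(u^*,v)=V_2(t,x)+\mathbb E\int_t^T\Big[\frac{\partial V_2}{\partial s}+H_2\big(s,x(s),u^*(s,x(s)),\tilde v(s),\tfrac{\partial V_2}{\partial x},\tfrac{\partial^2V_2}{\partial x^2}\big)\Big]ds,
\end{equation*}
the stochastic integral $\int_t^T\langle\frac{\partial V_2}{\partial x},\sigma\,dW\rangle$ having vanishing expectation. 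Because $T_2$ minimizes $H_2$ in its $\nu$-slot, the bracketed integrand is $\ge\frac{\partial V_2}{\partial s}+H_2(s,x,u^*,v^*(s,x,u^*),\cdots)=0$, with equality when $v=v^*$; hence $J_2^{t,x}(u^*,v)\ge V_2(t,x)=J_2^{t,x}(u^*,v^*(\cdot,\cdot,u^*))$, which is the second inequality in \eqref{feedback Stackelberg equilibrium}.

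The leader's inequality uses the same device, but now exploiting that whatever admissible $u$ the leader announces, the follower reacts through the \emph{fixed} response map $v^*(s,x,u(s,x))=T_2(s,x,u(s,x),\frac{\partial V_2}{\partial x},\frac{\partial^2 V_2}{\partial x^2})$. Writing $x(\cdot)=x^{t,x}(\cdot;u,v^*(\cdot,\cdot,u))$ and applying It\^o to $V_1(s,x(s))$ would yield
\begin{equation*}
J_1^{t,x}(u,v^*(\cdot,\cdot,u))=V_1(t,x)+\mathbb E\int_t^T\Big[\frac{\partial V_1}{\partial s}+H_1\big(s,x(s),u(s,x(s)),T_2(s,x(s),u(s,x(s)),\cdots),\tfrac{\partial V_1}{\partial x},\tfrac{\partial^2V_1}{\partial x^2}\big)\Big]ds.
\end{equation*}
Since $T_1$ minimizes $\mu\mapsto H_1(s,x,\mu,T_2(s,x,\mu,\cdots),p_1,A')$, the integrand is bounded below by $\frac{\partial V_1}{\partial s}+H_1(s,x,u^*,v^*(s,x,u^*),\cdots)=0$, with equality at $u=u^*$; therefore $J_1^{t,x}(u,v^*(\cdot,\cdot,u))\ge V_1(t,x)=J_1^{t,x}(u^*,v^*(\cdot,\cdot,u^*))$, the first inequality, which closes the argument.

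The step I expect to be the main obstacle is the rigorous justification that the local martingale $\int_t^\cdot\langle\frac{\partial V_i}{\partial x}(s,x(s)),\sigma(\cdots)\,dW(s)\rangle$ is a genuine martingale, so that its expectation really vanishes; this is not immediate from $V_i\in C_p([t,T]\times\mathbb R^n)$ alone, since it concerns the growth of $\frac{\partial V_i}{\partial x}$. The standard remedy I would use is to localize with $\tau_n=\inf\{s:|x(s)|\ge n\}\wedge T$, derive the identities on $[t,\tau_n]$, and pass to the limit via the estimate $\mathbb E\sup_{t\le s\le T}|x(s)|^p<\infty$ for the closed-loop state (guaranteed by the Lipschitz and growth hypotheses on $f,\sigma$ together with admissibility of the feedback controls) and the polynomial-growth bounds on $g_i,h_i$, invoking dominated convergence and uniform integrability. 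A secondary point to record is that $u^*$, $v^*$, and for each admissible $u$ the composition $v^*(\cdot,\cdot,u)$, must themselves be admissible Lipschitz feedbacks so that \eqref{parameterized state SDE} is well posed; this is inherited from the regularity of $V_1,V_2$ and the assumed Lipschitz dependence of $T_1,T_2$ on their arguments.
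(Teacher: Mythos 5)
Your proposal is correct and takes essentially the same route as the paper's own proof: It\^o's formula applied to $V_2$ along $x^{t,x}(\cdot;u^*,v)$ and to $V_1$ along $x^{t,x}(\cdot;u,v^*(\cdot,\cdot,u))$, with the pointwise minimality defining $T_2$ and $T_1$ together with the two HJB-type PDEs forcing the resulting integrands to be nonnegative and to vanish at $(u^*,v^*(\cdot,\cdot,u^*))$, all localized by the exit times $\tau_n$. If anything, you are slightly more explicit than the paper about the limiting step $n\to\infty$ (uniform integrability and the moment bound $\mathbb{E}\sup_{t\le s\le T}|x(s)|^p<\infty$) and about admissibility of the composed feedbacks, which the paper passes over with ``noting $\tau_n\to T$ almost surely.''
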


\begin{proof}
Suppose the leader adopts the strategy $u^*(\cdot,\cdot)$ and the follower chooses an arbitrary strategy $v(\cdot,\cdot,\cdot)\in \mathcal{V}[0,T]$.
Let $B_n(x)$ denotes the open ball of radius $n$, centered at $x$, i.e.,
\[B_n(x):=\left\{y\in\mathbb{R}^n|\sqrt{\sum\limits_{i=1}^n |x_i-y_i|^2}<n\right\},\]
and let $\tau_n$ be the first exit time of $x^{t,x}(\cdot;u^*,v)$ from $B_n(x)$, i.e.,
\[\tau_n:=\inf\big\{s|x^{t,x}(s;u^*,v)\notin B_n(x),\ t\le s\le T\big\}.\]
If $x^{t,x}(s;u^*,v)\in B_n(x)$ for all $s\in[t,T]$, then $\tau_n=T$.

Applying It\^{o}'s formula to $V_2(\cdot,x^{t,x}(\cdot;u^*,v))$, integrating from $t$ to $\tau_n$, and taking expectation, we obtain
\begin{equation*}
\begin{split}
&V_2(t,x)=\mathbb E\big[V_2(\tau_n,x^{t,x}(\tau_n;u^*,v))\big]\\
&\quad-\mathbb E\Bigg\{\int_t^{\tau_n}\bigg[\frac{\partial V_2}{\partial s}(s,x^{t,x}(s;u^*,v))+\bigg\langle\frac{\partial V_2}{\partial x}(s,x^{t,x}(s;u^*,v)),f\big(s,x^{t,x}(s;u^*,v),\\
&\qquad\qquad\qquad u^*(s,x^{t,x}(s;u^*,v)),v(s,x^{t,x}(s;u^*,v),u^*(s,x^{t,x}(s;u^*,v)))\big)\bigg\rangle\\
&\qquad\qquad+\frac{1}{2} \sum\limits_{i,j=1}^n \Big[a_{ij}\big(s,x^{t,x}(s;u^*,v),u^*(s,x^{t,x}(s;u^*,v)),\\
&\qquad\qquad\qquad v(s,x^{t,x}(s;u^*,v),u^*(s,x^{t,x}(s;u^*,v)))\big)\cdot\frac{{\partial}^2 V_2}{\partial x_i \partial x_j}(s,x^{t,x}(s;u^*,v))\Big]\\
&\qquad\qquad+g_2\big(s,x^{t,x}(s;u^*,v),u^*(s,x^{t,x}(s;u^*,v)),v(s,x^{t,x}(s;u^*,v),u^*(s,x^{t,x}(s;u^*,v)))\big)\bigg] ds \Bigg\}\\
&\quad+\mathbb
E\int_t^{\tau_n}g_2\big(s,x^{t,x}(s;u^*,v),u^*(s,x^{t,x}(s;u^*,v)),v(s,x^{t,x}(s;u^*,v),u^*(s,x^{t,x}(s;u^*,v)))\big)ds.
\end{split}
\end{equation*}
Let $n\to \infty$ and noting $\tau_n \to T$ almost surely, we get
\begin{equation*}
\begin{split}
&V_2(t,x)=\mathbb E\big[V_2(T,x^{t,x}(T;u^*,v))\big]\\
&\quad+\mathbb E\int_t^{T}g_2\big(s,x^{t,x}(s;u^*,v),u^*(s,x^{t,x}(s;u^*,v)),v(s,x^{t,x}(s;u^*,v),u^*(s,x^{t,x}(s;u^*,v)))\big)ds\\
&\quad-\mathbb E\Bigg\{\int_t^{T}\bigg[\frac{\partial V_2}{\partial s}(s,x^{t,x}(s;u^*,v))+\bigg\langle\frac{\partial V_2}{\partial x}(s,x^{t,x}(s;u^*,v)),f\big(s,x^{t,x}(s;u^*,v),\\
&\qquad\qquad\qquad u^*(s,x^{t,x}(s;u^*,v)),v(s,x^{t,x}(s;u^*,v),u^*(s,x^{t,x}(s;u^*,v)))\big)\bigg\rangle\\
&\qquad\qquad +\frac{1}{2} \sum\limits_{i,j=1}^n \Big[a_{ij}\big(s,x^{t,x}(s;u^*,v),u^*(s,x^{t,x}(s;u^*,v)),\\
&\qquad\qquad\qquad v(s,x^{t,x}(s;u^*,v),u^*(s,x^{t,x}(s;u^*,v)))\big)\cdot\frac{{\partial}^2 V_2}{\partial x_i \partial x_j}(s,x^{t,x}(s;u^*,v))\Big]\\
&\qquad\qquad+g_2\big(s,x^{t,x}(s;u^*,v),u^*(s,x^{t,x}(s;u^*,v)),v(s,x^{t,x}(s;u^*,v),u^*(s,x^{t,x}(s;u^*,v)))\big)\bigg] ds \Bigg\}\\
&\le J^{t,x}_2(u^*(\cdot,\cdot),v(\cdot,\cdot,u^*(\cdot,\cdot)))-\mathbb E\int_t^T\bigg[\frac{\partial V_2}{\partial s}(s,x^{t,x}(s;u^*,v))\\
&\qquad+\min\limits_{\nu \in\, V}\bigg\{\bigg\langle\frac{\partial V_2}{\partial x}(s,x^{t,x}(s;u^*,v)),f(s,x^{t,x}(s;u^*,v),u^*(s,x^{t,x}(s;u^*,v)),\nu)\bigg\rangle\\
&\qquad\qquad+\frac{1}{2} \sum\limits_{i,j=1}^n \Big[a_{ij}(s,x^{t,x}(s;u^*,v),u^*(s,x^{t,x}(s;u^*,v)),\nu)\cdot\frac{{\partial}^2 V_2}{\partial x_i \partial x_j}(s,x^{t,x}(s;u^*,v))\Big]\\
&\qquad\qquad+g_2(s,x^{t,x}(s;u^*,v),u^*(s,x^{t,x}(s;u^*,v)),\nu)\bigg\}\bigg] ds\\
\end{split}
\end{equation*}
\begin{equation}\label{eq:123}
\hspace{-9cm}
=J^{t,x}_2(u^*(\cdot,\cdot),v(\cdot,\cdot,u^*(\cdot,\cdot))).
\end{equation}
Similarly, if we apply It\^{o}'s formula to $V_2(\cdot,x^{t,x}(\cdot;u^*,v^*))$ and follow the above procedure, then from the definition of $u^*(\cdot,\cdot)$ and $v^*(\cdot,\cdot,\cdot)$, we get
\begin{equation}\label{eq:b}
V_2(t,x)=J^{t,x}_2(u^*(\cdot,\cdot),v^*(\cdot,\cdot,u^*(\cdot,\cdot))).
\end{equation}
From $(\ref{eq:123})$ and $(\ref{eq:b})$, we obtain
\begin{equation}\label{eq:c}
\begin{aligned}
&J^{t,x}_2(u^*(\cdot,\cdot),v^*(\cdot,\cdot,u^*(\cdot,\cdot)))\le J^{t,x}_2(u^*(\cdot,\cdot),v(\cdot,\cdot,u^*(\cdot,\cdot))),\\
&\hspace{4cm} \forall v(\cdot,\cdot,\cdot)\in \mathcal{V}[0,T],\quad \forall (t,x)\in [0,T]\times \mathbb{R}^n.
\end{aligned}
\end{equation}
\noindent Applying It\^{o}'s formula to $V_1(\cdot,x^{t,x}(\cdot;u,v^*))$ and applying analogous limiting argument, we can also obtain
\begin{equation*}
\begin{split}
&V_1(t,x)=\mathbb E\big[V_1(T,x^{t,x}(T;u,v^*))\big]\\
&\quad +\mathbb E\int_t^Tg_1\big(s,x^{t,x}(s;u,v^*),u(s,x^{t,x}(s;u,v^*)),v^*(s,x^{t,x}(s;u,v^*),u(s,x^{t,x}(s;u,v^*)))\big)ds\\
&\quad -\mathbb E\int_t^T\bigg[\frac{\partial V_1}{\partial s}(s,x^{t,x}(s;u,v^*))+\bigg\langle\frac{\partial V_1}{\partial x}(s,x^{t,x}(s;u,v^*)),f\big(s,x^{t,x}(s;u,v^*),\\
&\qquad\qquad\qquad u(s,x^{t,x}(s;u,v^*)),v^*(s,x^{t,x}(s;u,v^*),u(s,x^{t,x}(s;u,v^*)))\big)\bigg\rangle\\
&\qquad\qquad +\frac{1}{2} \sum\limits_{i,j=1}^n \Big[a_{ij}\big(s,x^{t,x}(s;u,v^*),u(s,x^{t,x}(s;u,v^*)),\\
&\qquad\qquad\qquad v^*(s,x^{t,x}(s;u,v^*),u(s,x^{t,x}(s;u,v^*)))\big)\cdot\frac{{\partial}^2 V_1}{\partial x_i \partial x_j}(s,x^{t,x}(s;u,v^*))\Big]\\
&\qquad\qquad +g_1\big(s,x^{t,x}(s;u,v^*),u(s,x^{t,x}(s;u,v^*)),v^*(s,x^{t,x}(s;u,v^*),u(s,x^{t,x}(s;u,v^*)))\big)\bigg]ds\\
&\le J^{t,x}_1(u(\cdot,\cdot),v^*(\cdot,\cdot,u(\cdot,\cdot)))-\mathbb E\int_t^T\bigg[\frac{\partial V_1}{\partial s}(s,x^{t,x}(s;u,v^*))\\
&\qquad +\min\limits_{\mu\in\, U}\bigg\{\bigg\langle\frac{\partial V_1}{\partial x}(s,x^{t,x}(s;u,v^*)),f\big(s,x^{t,x}(s;u,v^*),\mu,v^*(s,x^{t,x}(s;u,v^*),\mu)\big)\bigg\rangle\\
&\qquad\qquad +\frac{1}{2} \sum\limits_{i,j=1}^n \Big[a_{ij}\big(s,x^{t,x}(s;u,v^*),\mu,v^*(s,x^{t,x}(s;u,v^*),\mu)\big)\cdot\frac{{\partial}^2 V_1}{\partial x_i \partial x_j}(s,x^{t,x}(s;u,v^*))\Big]\\
&\qquad\qquad +g_1\big(s,x^{t,x}(s;u,v^*),\mu,v^*(s,x^{t,x}(s;u,v^*),\mu)\big)\bigg\}\bigg] ds\\
&=J^{t,x}_1(u(\cdot,\cdot),v^*(\cdot,\cdot,u(\cdot,\cdot))).\label{eq:d}
\end{split}
\end{equation*}
Analogously, applying It\^{o}'s formula to $V_1(\cdot,x^{t,x}(\cdot;u^*,v^*))$ and proceeding as above, we obtain
\begin{equation}\label{eq:e}
V_1(t,x)=J^{t,x}_1(u^*(\cdot,\cdot),v^*(\cdot,\cdot,u^*(\cdot,\cdot))),
\end{equation}
which implies that
\begin{equation}\label{eq:f}
\begin{aligned}
&J^{t,x}_1(u^*(\cdot,\cdot),v^*(\cdot,\cdot,u^*(\cdot,\cdot)))\le J^{t,x}_1(u(\cdot,\cdot),v^*(\cdot,\cdot,u(\cdot,\cdot))),\\
&\hspace{4cm} \forall u(\cdot,\cdot)\in \mathcal{U}[0,T],\quad \forall (t,x)\in [0,T]\times \mathbb{R}^n.
\end{aligned}
\end{equation}
So we conclude from $(\ref{eq:c})$ and $(\ref{eq:f})$, that $(u^*(\cdot,\cdot),v^*(\cdot,\cdot,u^*(\cdot,\cdot)))$ is a feedback Stackelberg equilibrium. The proof is complete.
\end{proof}

\section{Linear Quadratic Case}\label{Section 4}

In this section, an LQ feedback Stackelberg stochastic differential game is researched. We use Riccati equations to represent the feedback Stackelberg equilibrium. The state equation is
\begin{equation}\label{LQ state SDE-1}
\left\{
\begin{aligned}
dx(s)&=\big[A(s)x(s)+B_1(s)u(s)+B_2(s)v(s)+b(s)\big]ds\\
&\quad+\big[C(s)x(s)+D_1(s)u(s)+D_2(s)v(s)+\lambda(s)\big]dW(s),\quad s \in [t,T],\\
 x(t)&=x,
\end{aligned}
\right.
\end{equation}
where $A(\cdot), B_1(\cdot), B_2(\cdot), C(\cdot), D_1(\cdot), D_2(\cdot), b(\cdot), \lambda(\cdot)$ are given matrix-valued deterministic functions, and $W(\cdot)$ is one-dimensional for notational simplicity. The coefficients of the state equation satisfy the following:
\begin{equation*}\label{LQ state SDE-1 Coeff}
\left\{
\begin{aligned}
&A(\cdot),C(\cdot)\in L^{\infty}(t,T;\mathbb{R}^{n\times n}),\quad B_1(\cdot),D_1(\cdot)\in L^{\infty}(t,T;\mathbb{R}^{n\times {m_1}}),\\
&B_2(\cdot),D_2(\cdot)\in L^{\infty}(t,T;\mathbb{R}^{n\times {m_2}}),\quad b(\cdot),\lambda(\cdot)\in L^2(t,T;\mathbb{R}^n).
\end{aligned}
\right.
\end{equation*}
Next, for $i=1,2,$ we introduce the following cost functionals:
\begin{equation}\label{LQ cost functionals-leader and follower}
\begin{aligned}
&J_i(t,x;u(\cdot),v(\cdot))=\frac{1}{2}\mathbb E\Bigg\{\mathlarger{\int^T_t} \left[\left\langle\left( \begin{array}{ccc}
Q_i(s) & M_{i1}(s)^\top & M_{i2}(s)^\top\\
M_{i1}(s) & R_{i11}(s) & R_{i12}(s)\\
M_{i2}(s) & R_{i21}(s) & R_{i22}(s)\\
\end{array} \right)
\left( \begin{array}{ccc}
x(s)\\
u(s)\\
v(s)\\
\end{array} \right),
\left( \begin{array}{ccc}
x(s)\\
u(s)\\
v(s)\\
\end{array} \right)\right\rangle\right.\\
&\qquad\qquad+2\left.\left\langle \left( \begin{array}{ccc}
q_i(s)\\
\rho_{i1}(s)\\
\rho_{i2}(s)\\
\end{array} \right),
\left( \begin{array}{ccc}
x(s)\\
u(s)\\
v(s)\\
\end{array} \right)
\right\rangle
\right]ds+\langle L_i x(T), x(T)\rangle+2\langle N_i, x(T)\rangle\Bigg\},
\end{aligned}
\end{equation}
where $Q_i(\cdot), M_{i1}(\cdot), M_{i2}(\cdot), R_{i11}(\cdot), R_{i12}(\cdot), R_{i21}(\cdot), R_{i22}(\cdot), q_i(\cdot), \rho_{i1}(\cdot), \rho_{i2}(\cdot)$ are given matrix\\-valued deterministic functions, $L_i$ is a $n \times n$ symmetric matrix and $N_i \in \mathbb{R}^n$. The weighting functions in the cost functionals satisfy the following:
\begin{equation*}\label{LQ cost functionals Coeff}
\left\{
\begin{aligned}
&Q_i(\cdot)\in L^\infty(t,T;\mathcal{S}^n),\quad M_{i1}(\cdot)\in L^{\infty}(t,T;\mathbb{R}^{{m_1} \times n}), \qquad M_{i2}(\cdot)\in L^{\infty}(t,T;\mathbb{R}^{{m_2} \times n}),\\
&R_{i11}(\cdot)\in L^\infty(t,T;\mathcal{S}^{m_1}), \quad R_{i12}(\cdot)={R_{i21}(\cdot)}^\top\in L^\infty(t,T;\mathbb{R}^{{m_1}\times{m_2}}), \quad R_{i22}(\cdot)\in L^\infty(t,T;\mathcal{S}^{m_2}),\\
&q_i(\cdot)\in L^\infty(t,T;\mathbb{R}^n),\quad {\rho}_{i1}(\cdot)\in L^\infty(t,T;\mathbb{R}^{m_1}),\quad {\rho}_{i2}(\cdot)\in L^\infty(t,T;\mathbb{R}^{m_2}).
\end{aligned}
\right.
\end{equation*}
First of all, for the leader's every action $\mu \in U,$ we compute the follower's unique optimal response function:
\begin{equation*}
\begin{aligned}
&T_2(s,x,\mu,p_2,A''):=\mathop{argmin}\limits_{\nu \in\, V} H_2(s,x,\mu,\nu,p_2,A'')\\
&:=\mathop{argmin}\limits_{\nu \in\, V}\Big\{\langle p_2, A(s)x+B_1(s) \mu +B_2(s) \nu +b(s)\rangle+\frac{1}{2} \mathrm{tr} \big[\sigma(s,x,\mu,\nu)\sigma(s,x,\mu,\nu)^\top A''\big]\\
&\qquad\qquad\qquad+g_2(s,x,\mu,\nu)\Big\}\\
&:=\mathop{argmin}\limits_{\nu \in\, V}\Big\{\langle p_2, A(s)x+B_1(s) \mu +B_2(s) \nu +b(s)\rangle+\frac{1}{2} \langle A'' \sigma(s,x,\mu,\nu), \sigma(s,x,\mu,\nu) \rangle\\
&\qquad\qquad\qquad+g_2(s,x,\mu,\nu)\Big\},\quad s \in [t,T].
\end{aligned}
\end{equation*}
Using completion of squares, we get
\begin{equation*}
\begin{aligned}
&H_2(s,x,\mu,\nu,p_2,A'')\\
&=\frac{1}{2}\big|\hat{R}_2 (s)^{\frac{1}{2}}[\nu+\Psi (s)x+\Phi(s)\mu+\psi(s)]\big|^2\\
&\quad+\frac{1}{2}\langle(C(s)^\top A'' C(s)+Q_2(s)-\Psi(s)^\top\hat{R}_2 (s)\Psi(s))x,x\rangle\\
&\quad+\langle A(s)^\top p_2 +C(s)^\top A''D_1(s)\mu+C(s)^\top A''\lambda(s)+M_{21}(s)^\top \mu+q_2(s)-\Psi(s)^\top\hat{R}_2 (s)\Phi(s)\mu\\
&\qquad-\Psi(s)^\top\hat{R}_2(s)\psi(s),x\rangle+\frac{1}{2}\langle(D_1(s)^\top A''D_1(s)+R_{211}(s)-\Phi(s)^\top \hat{R}_2 (s)\Phi(s))\mu,\mu\rangle\\
&\quad+\langle B_1(s)^\top p_2+D_1(s)^\top A'' \lambda(s)+\rho_{21}(s)-\Phi(s)^\top\hat{R}_2 (s)\psi(s),\mu\rangle\\
&\quad+{p_2}^\top b(s)+\frac{1}{2} \lambda(s)^\top A'' \lambda(s)-\frac{1}{2} \psi(s)^\top\hat{R}_2 (s)\psi(s),\quad s \in [t,T],
\end{aligned}
\end{equation*}
where
\begin{equation*}\label{LQ state SDE-1 Coeff--}
\left\{
\begin{aligned}
&\hat{R}_2(s):= D_2(s)^\top A''D_2(s)+R_{222}(s),\\
&\Psi(s):= \hat{R}_2(s)^{-1}(D_2(s)^\top A''C(s)+M_{22}(s)),\\
&\Phi(s):= \hat{R}_2(s)^{-1}(D_2(s)^\top A''D_1(s)+R_{212}(s)^\top),\\
&\psi(s):= \hat{R}_2(s)^{-1}(B_2(s)^\top p_2+D_2(s)^\top A''\lambda(s)+\rho_{22}(s)),\quad s \in [t,T].
\end{aligned}
\right.
\end{equation*}
Thus, by assuming $\hat{R}_2(s)>0, s \in [t,T],$ we obtain from the above that
\begin{equation}\label{g}
\begin{aligned}
T_2(s,x,\mu,p_2,A'')=-\Psi(s)x-\Phi(s)\mu-\psi(s),\quad s \in [t,T].
\end{aligned}
\end{equation}
Next, the leader's optimal action is:
\begin{equation*}
\begin{aligned}
&T_1(s,x,p_1,p_2,A',A''):=\mathop{argmin}\limits_{\mu \in\, U} H_1(s,x,\mu,T_2(s,x,\mu,p_2,A''),p_1,A')\\
&:=\mathop{argmin}\limits_{\mu \in\, U}\Big\{\langle p_1, A(s)x+B_1(s) \mu +B_2(s)(-\Psi(s)x-\Phi(s)\mu-\psi(s)) +b(s)\rangle\\
&\qquad\qquad\qquad+\frac{1}{2} \mathrm{tr} \big[\sigma(s,x,\mu,-\Psi(s)x-\Phi(s)\mu-\psi(s))\sigma(s,x,\mu,-\Psi(s)x-\Phi(s)\mu-\psi(s))^\top A'\big]\\
&\qquad\qquad\qquad+g_1(s,x,\mu,-\Psi(s)x-\Phi(s)\mu-\psi(s))\Big\}\\
&:=\mathop{argmin}\limits_{\mu \in\, U}\Big\{\langle p_1, A(s)x+B_1(s) \mu +B_2(s)(-\Psi(s)x-\Phi(s)\mu-\psi(s)) +b(s)\rangle\\
&\qquad\qquad\qquad+\frac{1}{2} \langle A' \sigma(s,x,\mu,-\Psi(s)x-\Phi(s)\mu-\psi(s)),\sigma(s,x,\mu,-\Psi(s)x-\Phi(s)\mu-\psi(s)) \rangle\\
&\qquad\qquad\qquad+g_1(s,x,\mu,-\Psi(s)x-\Phi(s)\mu-\psi(s))\Big\},\quad s \in [t,T].\\
\end{aligned}
\end{equation*}
In order to get $T_1(s,x,p_1,p_2,A',A'')$, we calculate
\begin{equation*}
\begin{aligned}
&H_1(s,x,\mu,T_2(s,x,\mu,p_2,A''),p_1,A')\\
&={\mu}^\top \Big[\frac{1}{2}D_1(s)^\top A'D_1(s)-{\Phi(s)}^\top D_2(s)^\top A'D_1(s)+\frac{1}{2}{\Phi(s)}^\top D_2(s)^\top A'D_2(s)\Phi(s)+\\
&\qquad\quad\frac{1}{2}R_{111}(s)-{\Phi(s)}^\top R_{112}(s)^\top+\frac{1}{2}{\Phi(s)}^\top R_{122}(s)\Phi(s)\Big]\mu\\
&\quad+\Big[p_1^\top B_1(s)-p_1^\top B_2(s)\Phi(s)+x^\top C(s)^\top A'D_1(s)-x^\top C(s)^\top A'D_2(s)\Phi(s)\\
&\qquad-x^\top \Psi(s)^\top D_2(s)^\top A'D_1(s)-\psi(s)^\top D_2(s)^\top A'D_1(s)+\lambda(s)^\top A'D_1(s)\\
&\qquad+x^\top \Psi(s)^\top D_2(s)^\top A'D_2(s)\Phi(s)+\psi(s)^\top D_2(s)^\top A'D_2(s)\Phi(s)-\lambda(s)^\top A'D_2(s)\Phi(s)\\
&\qquad+x^\top M_{11}(s)^\top-x^\top M_{12}(s)^\top\Phi(s)-x^\top \Psi(s)^\top R_{112}(s)^\top-\psi(s)^\top R_{112}(s)^\top\\
&\qquad+x^\top \Psi(s)^\top R_{122}(s)\Phi(s)+\psi(s)^\top R_{122}(s)\Phi(s)+\rho_{11}(s)^\top-\rho_{12}(s)^\top \Phi(s)\Big]\mu\\
&\quad+x^\top \Big[\frac{1}{2}C(s)^\top A'C(s)-C(s)^\top A'D_2(s)\Psi(s)+\frac{1}{2}{\Psi(s)}^\top D_2(s)^\top A'D_2(s)\Psi(s)+\frac{1}{2}Q_1(s)\\
&\qquad-\Psi(s)^\top M_{12}(s)+\frac{1}{2}{\Psi(s)}^\top R_{122}(s)\Psi(s)\Big]x\\
&\quad+\Big[p_1^\top A(s)-p_1^\top B_2(s)\Psi(s)-\psi(s)^\top D_2(s)^\top A'C(s)+\lambda(s)^\top A'C(s)\\
&\qquad+{\psi(s)}^\top D_2(s)^\top A'D_2(s)\Psi(s)-\lambda(s)^\top A'D_2(s)\Psi(s)-\psi(s)^\top M_{12}(s)\\
&\qquad+{\psi(s)}^\top R_{122}(s)\Psi(s)+q_1(s)^\top-\rho_{12}(s)^\top \Psi(s)\Big]x\\
&\quad-p_1^\top B_2(s)\psi(s)+p_1^\top b(s)+\frac{1}{2}{\psi(s)}^\top D_2(s)^\top A'D_2(s)\psi(s)-{\psi(s)}^\top D_2(s)^\top A'\lambda(s)\\
&\quad+\frac{1}{2}\lambda(s)^\top A'\lambda(s)+\frac{1}{2}\psi(s)^\top R_{122}(s)\psi(s)-{\rho}_{12}(s)^\top \psi(s),\quad s \in [t,T].
\end{aligned}
\end{equation*}
Then
\begin{equation*}
\frac{\partial H_1(s,x,\mu,T_2(s,x,\mu,p_2,A''),p_1,A')}{\partial \mu}=\frac{\hat{R}_1(s)+\hat{R}_1(s)^\top}{2}\mu+Y(s)^\top,\quad s \in [t,T],
\end{equation*}
\begin{equation*}
\begin{aligned}
\frac{{\partial}^2 H_1(s,x,\mu,T_2(s,x,\mu,p_2,A''),p_1,A')}{{\partial \mu}^2}=\frac{\hat{R}_1(s)+\hat{R}_1(s)^\top}{2},\quad s \in [t,T],
\end{aligned}
\end{equation*}
where
\begin{equation*}
\begin{aligned}
\hat{R}_1(s)&:= D_1(s)^\top A'D_1(s)-2\Phi(s)^\top D_2(s)^\top A'D_1(s)+\Phi(s)^\top D_2(s)^\top A'D_2(s)\Phi(s)\\
            &\quad +R_{111}(s)-2\Phi(s)^\top R_{112}(s)^\top+{\Phi(s)}^\top R_{122}(s)\Phi(s),\\
        Y(s)&:= p_1^\top B_1(s)-p_1^\top B_2(s)\Phi(s)+x^\top C(s)^\top A'D_1(s)-x^\top C(s)^\top A'D_2(s)\Phi(s)\\
            &\quad -x^\top \Psi(s)^\top D_2(s)^\top A'D_1(s)-\psi(s)^\top D_2(s)^\top A'D_1(s)+\lambda(s)^\top A'D_1(s)\\
            &\quad +x^\top \Psi(s)^\top D_2(s)^\top A'D_2(s)\Phi(s)+\psi(s)^\top D_2(s)^\top A'D_2(s)\Phi(s)-\lambda(s)^\top A'D_2(s)\Phi(s)\\
            &\quad +x^\top M_{11}(s)^\top-x^\top M_{12}(s)^\top\Phi(s)-x^\top \Psi(s)^\top R_{112}(s)^\top-\psi(s)^\top R_{112}(s)^\top\\
            &\quad +x^\top \Psi(s)^\top R_{122}(s)\Phi(s)+\psi(s)^\top R_{122}(s)\Phi(s)+\rho_{11}(s)^\top-\rho_{12}(s)^\top \Phi(s).
\end{aligned}
\end{equation*}
Thus, by assuming
\begin{equation*}
\left\{
\begin{aligned}
&\frac{\hat{R}_1(s)+\hat{R}_1(s)^\top}{2}\mu_0+Y(s)^\top=0,\qquad \frac{\hat{R}_1(s)+\hat{R}_1(s)^\top}{2}>0,\\
&\mu_0 \in \mathcal{U}[0,T],\quad s \in [t,T],
\end{aligned}
\right.
\end{equation*}
we get
\begin{equation}\label{h}
\begin{aligned}
T_1(s,x,p_1,p_2,A',A'')=\mu_0=-2(\hat{R}_1(s)+\hat{R}_1(s)^\top)^{-1}Y(s)^\top,\quad s \in [t,T].
\end{aligned}
\end{equation}
Substituting $(\ref{g})$ and $(\ref{h})$ into $(\ref{PDE-1})$ and $(\ref{PDE-2})$, when
\[p_1=\frac{\partial V_1}{\partial x},\quad p_2=\frac{\partial V_2}{\partial x},\quad A'=\frac{{\partial}^2 V_1}{{\partial x}^2},\quad A''=\frac{{\partial}^2 V_2}{{\partial x}^2},\]
we define
\begin{equation*}
\begin{aligned}
\bar{\hat{R}}_2(s)& := \hat{R}_2(s){\bigg\vert}_{p_1=\frac{\partial V_1}{\partial x},\ p_2=\frac{\partial V_2}{\partial x},\ A'=\frac{{\partial}^2 V_1}{{\partial x}^2},\ A''=\frac{{\partial}^2 V_2}{{\partial x}^2}}\\
&=D_2(s)^\top \frac{{\partial}^2 V_2}{{\partial x}^2} D_2(s)+R_{222}(s),\\
\bar{\Psi}(s)& := {\Psi (s)}{\bigg\vert}_{p_1=\frac{\partial V_1}{\partial x},\ p_2=\frac{\partial V_2}{\partial x},\ A'=\frac{{\partial}^2 V_1}{{\partial x}^2},\ A''=\frac{{\partial}^2 V_2}{{\partial x}^2}}\\
&=\bar{\hat{R}}_2(s)^{-1}\left(D_2(s)^\top \frac{{\partial}^2 V_2}{{\partial x}^2}C(s)+M_{22}(s)\right),\\
\bar{\Phi}(s)& := {\Phi(s)}{\bigg\vert}_{p_1=\frac{\partial V_1}{\partial x},\ p_2=\frac{\partial V_2}{\partial x},\ A'=\frac{{\partial}^2 V_1}{{\partial x}^2},\ A''=\frac{{\partial}^2 V_2}{{\partial x}^2}}\\
&=\bar{\hat{R}}_2(s)^{-1}\left(D_2(s)^\top \frac{{\partial}^2 V_2}{{\partial x}^2}D_1(s)+R_{212}(s)^\top\right),\\
\bar{\psi}(s)& := {\psi(s)}{\bigg\vert}_{p_1=\frac{\partial V_1}{\partial x},\ p_2=\frac{\partial V_2}{\partial x},\ A'=\frac{{\partial}^2 V_1}{{\partial x}^2},\ A''=\frac{{\partial}^2 V_2}{{\partial x}^2}}\\
&=\bar{\hat{R}}_2(s)^{-1}\left(B_2(s)^\top \frac{{\partial} V_2}{\partial x}+D_2(s)^\top \frac{{\partial}^2 V_2}{{\partial x}^2}{\lambda}(s)+{\rho}_{22}(s)\right),
\end{aligned}
\end{equation*}
\begin{equation*}
\begin{aligned}
\bar{\mu}_0& := {\mu}_0{\bigg\vert}_{p_1=\frac{\partial V_1}{\partial x},\ p_2=\frac{\partial V_2}{\partial x},\ A'=\frac{{\partial}^2 V_1}{{\partial x}^2},\ A''=\frac{{\partial}^2 V_2}{{\partial x}^2}}\\
&=\bigg[\bigg(D_1(s)^\top \frac{{\partial}^2 V_1}{{\partial x}^2}-\bar{\Phi}(s)^\top D_2(s)^\top \frac{{\partial}^2 V_1}{{\partial x}^2}\bigg)(D_2(s)\bar{\Phi}(s)-D_1(s))\\
&\qquad+\bar{\Phi}(s)^\top\big(R_{112}(s)^\top-R_{122}(s)\bar{\Phi}(s)\big)-R_{111}(s)+R_{112}(s)\bar{\Phi}(s)\bigg]^{-1}\\
&\quad\times\bigg[\frac{\partial V_1}{\partial x}^\top\big(B_1(s)-B_2(s)\bar{\Phi}(s)\big)+\bigg(-x^\top C(s)^\top \frac{{\partial}^2 V_1}{{\partial x}^2}+x^\top {\bar{\Psi}(s)}^\top D_2(s)^\top \frac{{\partial}^2 V_1}{{\partial x}^2}\\
&\qquad +\bar{\psi}(s)^\top D_2(s)^\top\frac{{\partial}^2 V_1}{{\partial x}^2}-{\lambda(s)}^\top \frac{{\partial}^2 V_1}{{\partial x}^2}\bigg)\big(D_2(s)\bar{\Phi}(s)-D_1(s)\big)\\
&\qquad +x^\top M_{11}(s)^\top-x^\top M_{12}(s)^\top \bar{\Phi}(s)+\big(x^\top{\bar{\Psi}(s)}^\top+\bar{\psi}(s)^\top\big)\\
&\qquad \times\big(R_{122}(s)\bar{\Phi}(s)-R_{112}(s)^\top\big)+\rho_{11}(s)^\top-\rho_{12}(s)^\top\bar{\Phi}(s)\bigg]^\top,\quad s \in [t,T].
\end{aligned}
\end{equation*}
Then we get the resulting PDEs system in the LQ problem:
\begin{equation*}
\begin{aligned}
&\frac{\partial V_1}{\partial s}(s,x)+{\bar{\mu}_0}^\top\bigg[\frac{1}{2}D_1(s)^\top \frac{{\partial}^2 V_1}{{\partial x}^2} D_1(s)-\bar{\Phi}(s)^\top D_2(s)^\top \frac{{\partial}^2 V_1}{{\partial x}^2} D_1(s)\\
&\quad +\frac{1}{2}\bar{\Phi}(s)^\top D_2(s)^\top \frac{{\partial}^2 V_1}{{\partial x}^2} D_2(s)\bar{\Phi}(s)+\frac{1}{2}R_{111}(s)-\bar{\Phi}(s)^\top R_{112}(s)^\top+\frac{1}{2}\bar{\Phi}(s)^\top R_{122}(s)\bar{\Phi}(s)\bigg]\bar{\mu}_0\\
&+\bigg[\frac{\partial V_1}{\partial x}^\top\big(B_1(s)-B_2(s)\bar{\Phi}(s)\big)+\bigg(-x^\top C(s)^\top \frac{{\partial}^2 V_1}{{\partial x}^2}+x^\top {\bar{\Psi}(s)}^\top D_2(s)^\top \frac{{\partial}^2 V_1}{{\partial x}^2}\\
&\qquad+\bar{\psi}(s)^\top D_2(s)^\top\frac{{\partial}^2 V_1}{{\partial x}^2}-{\lambda(s)}^\top \frac{{\partial}^2 V_1}{{\partial x}^2}\bigg)(D_2(s)\bar{\Phi}(s)-D_1(s))+x^\top M_{11}(s)^\top\\
&\qquad-x^\top M_{12}(s)^\top \bar{\Phi}(s)+(x^\top{\bar{\Psi}(s)}^\top+\bar{\psi}(s)^\top)(R_{122}(s)\bar{\Phi}(s)-R_{112}(s)^\top)\\
&\qquad+\rho_{11}(s)^\top-\rho_{12}(s)^\top\bar{\Phi}(s)\bigg]\bar{\mu}_0\\
&+x^\top\bigg(\frac{1}{2}C(s)^\top \frac{{\partial}^2 V_1}{{\partial x}^2} C(s)-C(s)^\top \frac{{\partial}^2 V_1}{{\partial x}^2} D_2(s)\bar{\Psi}(s)+\frac{1}{2}\bar{\Psi}(s)^\top D_2(s)^\top \frac{{\partial}^2 V_1}{{\partial x}^2} D_2(s)\bar{\Psi}(s)\\
&\quad\qquad+\frac{1}{2} Q_1(s)-\bar{\Psi}(s)^\top M_{12}(s)+\frac{1}{2}\bar{\Psi}(s)^\top R_{122}(s) \bar{\Psi}(s)\bigg)x\\
&+\bigg[\frac{\partial V_1}{\partial x}^\top \big(A(s)-B_2(s)\bar{\Psi}(s)\big)+\bar{\psi}(s)^\top D_2(s)^\top \frac{{\partial}^2 V_1}{{\partial x}^2}\big(D_2(s)\bar{\Psi}(s)-C(s)\big)\\
&\qquad+{\lambda}(s)^\top \frac{{\partial}^2 V_1}{{\partial x}^2}\big(C(s)-D_2(s)\bar{\Psi}(s)\big)+\bar{\psi}(s)^\top\big(R_{122}(s)\bar{\Psi}(s)-M_{12}(s)\big)\\
&\qquad+q_1(s)^\top-\rho_{12}(s)^\top\bar{\Psi}(s)\bigg]x
\end{aligned}
\end{equation*}
\begin{equation}\label{PDE-3}
\begin{aligned}
&+\frac{\partial V_1}{\partial x}^\top\big(b(s)-B_2(s)\bar{\psi}(s)\big)+\frac{1}{2}\bar{\psi}(s)^\top D_2(s)^\top \frac{{\partial}^2 V_1}{{\partial x}^2}D_2(s)\bar{\psi}(s)-\bar{\psi}(s)^\top D_2(s)^\top \frac{{\partial}^2 V_1}{{\partial x}^2}\lambda(s)\\
&+\frac{1}{2}\lambda(s)^\top \frac{{\partial}^2 V_1}{{\partial x}^2}\lambda(s)+\frac{1}{2}\bar{\psi}(s)^\top R_{122}(s)\bar{\psi}(s)-\rho_{12}(s)^\top \bar{\psi}(s)=0,\quad s \in [t,T],
\end{aligned}
\end{equation}
with $V_1(T,x)=\frac{1}{2}\langle L_1 x,x\rangle+\langle N_1,x\rangle$;
\begin{equation}\label{PDE-4}
\begin{aligned}
&\frac{\partial V_2}{\partial s}(s,x)+\frac{1}{2}x^\top\bigg[C(s)^\top \frac{{\partial}^2 V_2}{{\partial x}^2} C(s)+Q_2(s)-\bar{\Psi}(s)^\top \bar{\hat{R}}_2(s)\bar{\Psi}(s)\bigg]x\\
&+\bigg[\frac{\partial V_2}{\partial x}^\top A(s)+{\bar{\mu}_0}^\top D_1(s)^\top \frac{{\partial}^2 V_2}{{\partial x}^2} C(s)+\lambda(s)^\top \frac{{\partial}^2 V_2}{{\partial x}^2}C(s)\\
&\qquad +{\bar{\mu}_0}^\top M_{21}(s)+q_2(s)^\top-{\bar{\mu}_0}^\top \bar{\Phi}(s)^\top \bar{\hat{R}}_2(s)\bar{\Psi}(s)-\bar{\psi}(s)^\top \bar{\hat{R}}_2(s)\bar{\Psi}(s)\bigg]x\\
&+\frac{1}{2}{\bar{\mu}_0}^\top\bigg[D_1(s)^\top \frac{{\partial}^2 V_2}{{\partial x}^2} D_1(s)+R_{211}(s)-\bar{\Phi}(s)^\top \bar{\hat{R}}_2(s)\bar{\Phi}(s) \bigg]{\bar{\mu}_0}\\
&+\bigg[\frac{\partial V_2}{\partial x}^\top B_1(s)+\lambda(s)^\top \frac{{\partial}^2 V_2}{{\partial x}^2}D_1(s)+\rho_{21}(s)^\top-\bar{\psi}(s)^\top \bar{\hat{R}}_2(s)\bar{\Phi}(s)\bigg]\bar{\mu}_0\\
&+\frac{\partial V_2}{\partial x}^\top b(s)+\frac{1}{2}\lambda(s)^\top \frac{{\partial}^2 V_2}{{\partial x}^2}\lambda(s)-\frac{1}{2}\bar{\psi}(s)^\top \bar{\hat{R}}_2(s)\bar{\psi}(s)=0,\quad s \in [t,T],
\end{aligned}
\end{equation}
with $V_2(T,x)=\frac{1}{2}\langle L_2 x,x\rangle+\langle N_2,x\rangle$.

In general, since the two PDEs in the above system are coupled and have complex structure, it is very difficult to solve this system explicitly to obtain its solution $V_1$ and $V_2$. In the following two subsections, we consider some special cases.

\subsection{Case 1}\label{Case 1}
Let $b(\cdot)=0,D_1(\cdot)=0,D_2(\cdot)=0,\lambda(\cdot)=0,M_{11}(\cdot)=0,M_{12}(\cdot)=0,M_{21}(\cdot)=0,M_{22}(\cdot)=0,R_{122}(\cdot)=0,R_{211}(\cdot)=0,q_1(\cdot)=0,
\rho_{11}(\cdot)=0,\rho_{12}(\cdot)=0,q_2(\cdot)=0,\rho_{21}(\cdot)=0,\rho_{22}(\cdot)=0,N_1=0,N_2=0$ in $(\ref{LQ state SDE-1})$ and $(\ref{LQ cost functionals-leader and follower})$. Therefore, the state equation and cost functionals are of the following form:
\begin{equation}\label{i}
\left\{
\begin{aligned}
dx(s)&=(A(s)x(s)+B_1(s)u(s)+B_2(s)v(s))ds+C(s)x(s)dW(s),\quad s \in [t,T],\\
 x(t)&=x,
\end{aligned}
\right.
\end{equation}
and
\begin{equation}\label{j}
\begin{aligned}
J_1(t,x;u(\cdot),v(\cdot))=\frac{1}{2}\mathbb{E}&\bigg\{\int_t^T\big[x(s)^\top Q_1(s)x(s)+u(s)^\top R_{111}(s)u(s)+2u(s)^\top R_{112}(s)v(s)\big]ds\\
&\quad+\langle L_1 x(T),x(T)\rangle\bigg\},\\
J_2(t,x;u(\cdot),v(\cdot))=\frac{1}{2}\mathbb{E}&\bigg\{\int_t^T\big[x(s)^\top Q_2(s)x(s)+v(s)^\top R_{222}(s)v(s)+2u(s)^\top R_{212}(s)v(s)\big]ds\\
&\quad+\langle L_2 x(T),x(T)\rangle\bigg\}.
\end{aligned}
\end{equation}
Moreover, we can get
\begin{equation*}
\begin{aligned}
&\bar{\hat{R}}_2(s)=R_{222}(s),\qquad\qquad\quad \bar{\Psi}(s)=0,\\
&\bar{\Phi}(s)=R_{222}(s)^{-1}R_{212}(s)^\top, \quad\bar{\psi}(s)=R_{222}(s)^{-1}B_2(s)^\top\frac{\partial V_2}{\partial x},\quad s \in [t,T].
\end{aligned}
\end{equation*}
By assuming
\begin{equation*}
\left\{
\begin{aligned}
&R_{222}(s)>0,\\
&R_{111}(s)-R_{212}(s)R_{222}(s)^{-1}R_{112}(s)^\top-R_{112}(s)R_{222}(s)^{-1}R_{212}(s)^\top>0,\quad s \in [t,T],
\end{aligned}
\right.
\end{equation*}
we obtain
\begin{equation*}
\begin{aligned}
\bar{\mu}_0=&\bigg[R_{212}(s)R_{222}(s)^{-1}R_{112}(s)^\top+R_{112}(s)R_{222}(s)^{-1}R_{212}(s)^\top-R_{111}(s)\bigg]^{-1}\\
&\times\bigg[(B_1(s)^\top-R_{212}(s)R_{222}(s)^{-1}B_2(s)^\top)\frac{\partial V_1}{\partial x}-R_{112}(s)R_{222}(s)^{-1}B_2(s)^\top \frac{\partial V_2}{\partial x}\bigg],\quad s \in [t,T].
\end{aligned}
\end{equation*}

In this case, $(\ref{PDE-3})$ becomes
\begin{equation}\label{PDE-5}
\begin{aligned}
&\frac{\partial V_1}{\partial s}(s,x)+\bigg[(B_1(s)^\top-R_{212}(s)R_{222}(s)^{-1}B_2(s)^\top)\frac{\partial V_1}{\partial x}-R_{112}(s)R_{222}(s)^{-1}B_2(s)^\top \frac{\partial V_2}{\partial x}\bigg]^\top\\
&\quad\times\bigg[R_{212}(s)R_{222}(s)^{-1}R_{112}(s)^\top+R_{112}(s)R_{222}(s)^{-1}R_{212}(s)^\top-R_{111}(s)\bigg]^{-1}\\
&\quad\times\bigg[\frac{1}{2}R_{111}(s)-R_{212}(s)R_{222}(s)^{-1}R_{112}(s)^\top\bigg]\\
&\quad\times\bigg[R_{212}(s)R_{222}(s)^{-1}R_{112}(s)^\top+R_{112}(s)R_{222}(s)^{-1}R_{212}(s)^\top-R_{111}(s)\bigg]^{-1}\\
&\quad\times\bigg[(B_1(s)^\top-R_{212}(s)R_{222}(s)^{-1}B_2(s)^\top)\frac{\partial V_1}{\partial x}-R_{112}(s)R_{222}(s)^{-1}B_2(s)^\top \frac{\partial V_2}{\partial x}\bigg]\\
&+\bigg[\big(B_1(s)^\top-R_{212}(s)R_{222}(s)^{-1}B_2(s)^\top\big)\frac{\partial V_1}{\partial x}-R_{112}(s)R_{222}(s)^{-1}B_2(s)^\top \frac{\partial V_2}{\partial x}\bigg]^\top\\
&\quad\times\bigg[R_{212}(s)R_{222}(s)^{-1}R_{112}(s)^\top+R_{112}(s)R_{222}(s)^{-1}R_{212}(s)^\top-R_{111}(s)\bigg]^{-1}\\
&\quad\times\bigg[\big(B_1(s)^\top-R_{212}(s)R_{222}(s)^{-1}B_2(s)^\top\big)\frac{\partial V_1}{\partial x}-R_{112}(s)R_{222}(s)^{-1}B_2(s)^\top \frac{\partial V_2}{\partial x}\bigg]\\
&+\frac{1}{2}x^\top\bigg[C(s)^\top\frac{{\partial}^2 V_1}{{\partial x}^2}C(s)+Q_1(s)\bigg]x\\
&+\frac{\partial V_1}{\partial x}^\top A(s)x-\frac{\partial V_1}{\partial x}^\top B_2(s)R_{222}(s)^{-1}B_2(s)^\top\frac{\partial V_2}{\partial x}=0,\quad s \in [t,T],
\end{aligned}
\end{equation}
with $V_1(T,x)=\frac{1}{2}\langle L_1 x,x\rangle$; and $(\ref{PDE-4})$ becomes
\begin{equation}\label{PDE-6}
\begin{aligned}
&\frac{\partial V_2}{\partial s}(s,x)-\frac{1}{2}\bigg[(B_1(s)^\top-R_{212}(s)R_{222}(s)^{-1}B_2(s)^\top)\frac{\partial V_1}{\partial x}-R_{112}(s)R_{222}(s)^{-1}B_2(s)^\top \frac{\partial V_2}{\partial x}\bigg]^\top\\
&\qquad\times\bigg[R_{212}(s)R_{222}(s)^{-1}R_{112}(s)^\top+R_{112}(s)R_{222}(s)^{-1}R_{212}(s)^\top-R_{111}(s)\bigg]^{-1}\\
&\qquad\times R_{212}(s)R_{222}(s)^{-1}R_{212}(s)^\top\\
&\qquad\times\bigg[R_{212}(s)R_{222}(s)^{-1}R_{112}(s)^\top+R_{112}(s)R_{222}(s)^{-1}R_{212}(s)^\top-R_{111}(s)\bigg]^{-1}\\
&\qquad\times\bigg[(B_1(s)^\top-R_{212}(s)R_{222}(s)^{-1}B_2(s)^\top)\frac{\partial V_1}{\partial x}-R_{112}(s)R_{222}(s)^{-1}B_2(s)^\top \frac{\partial V_2}{\partial x}\bigg]\\
&+\bigg[\big(B_1(s)^\top-R_{212}(s)R_{222}(s)^{-1}B_2(s)^\top\big)\frac{\partial V_2}{\partial x}\bigg]^\top\\
&\quad\times\bigg[R_{212}(s)R_{222}(s)^{-1}R_{112}(s)^\top+R_{112}(s)R_{222}(s)^{-1}R_{212}(s)^\top-R_{111}(s)\bigg]^{-1}\\
&\quad\times\bigg[\big(B_1(s)^\top-R_{212}(s)R_{222}(s)^{-1}B_2(s)^\top\big)\frac{\partial V_1}{\partial x}-R_{112}(s)R_{222}(s)^{-1}B_2(s)^\top \frac{\partial V_2}{\partial x}\bigg]\\
&+\frac{1}{2}x^\top\bigg[C(s)^\top\frac{{\partial}^2 V_2}{{\partial x}^2}C(s)+Q_2(s)\bigg]x\\
&+\frac{\partial V_2}{\partial x}^\top A(s)x-\frac{1}{2}\frac{\partial V_2}{\partial x}^\top B_2(s)R_{222}(s)^{-1}B_2(s)^\top\frac{\partial V_2}{\partial x}=0,\quad s \in [t,T],
\end{aligned}
\end{equation}
with $V_2(T,x)=\frac{1}{2}\langle L_2 x,x\rangle$.

We conjecture solutions of the following quadratic form:
\begin{equation}\label{quadratic Value}
\begin{aligned}
V_1(s,x)&=\frac{1}{2}\langle P_1(s) x,x\rangle,\\
V_2(s,x)&=\frac{1}{2}\langle P_2(s) x,x\rangle,
\end{aligned}
\end{equation}
for some suitable $P_1(\cdot)$ and $P_2(\cdot)$ (where $P_1(\cdot)$ and $P_2(\cdot)$ are $n\times n$ symmetric matrices) with
\[P_1(T)=L_1,\qquad P_2(T)=L_2.\]
Substituting $(\ref{quadratic Value})$ into $(\ref{PDE-5})$ and $(\ref{PDE-6})$, and comparing the quadratic terms in $x$, we get the following system of Riccati equations:
\begin{equation}\label{Riccati eq-1}
\begin{aligned}
&\dot{P}_1(s)+\bigg[\big(B_1(s)^\top-R_{212}(s)R_{222}(s)^{-1}B_2(s)^\top\big)P_1(s)-R_{112}(s)R_{222}(s)^{-1}B_2(s)^\top P_2(s)\bigg]^\top\\
&\quad\times\bigg[R_{212}(s)R_{222}(s)^{-1}R_{112}(s)^\top+R_{112}(s)R_{222}(s)^{-1}R_{212}(s)^\top-R_{111}(s)\bigg]^{-1}\\
&\quad\times\bigg[\big(B_1(s)^\top-R_{212}(s)R_{222}(s)^{-1}B_2(s)^\top\big)P_1(s)-R_{112}(s)R_{222}(s)^{-1}B_2(s)^\top P_2(s)\bigg]\\
&+C(s)^\top P_1(s)C(s)+Q_1(s)+P_1(s)A(s)+A(s)^\top P_1(s)\\
&-P_1(s)B_2(s)R_{222}(s)^{-1}B_2(s)^\top P_2(s)-P_2(s)B_2(s)R_{222}(s)^{-1}B_2(s)^\top P_1(s)=0,\quad s \in [t,T],
\end{aligned}
\end{equation}
with $P_1(T)=L_1$; and
\begin{equation}\label{Riccati eq-2}
\begin{aligned}
&\dot{P}_2(s)-\bigg[\big(B_1(s)^\top-R_{212}(s)R_{222}(s)^{-1}B_2(s)^\top\big)P_1(s)-R_{112}(s)R_{222}(s)^{-1}B_2(s)^\top P_2(s)\bigg]^\top\\
&\quad\times\bigg[R_{212}(s)R_{222}(s)^{-1}R_{112}(s)^\top+R_{112}(s)R_{222}(s)^{-1}R_{212}(s)^\top-R_{111}(s)\bigg]^{-1}\\
&\quad\times R_{212}(s)R_{222}(s)^{-1}R_{212}(s)^\top\\
&\quad\times\bigg[R_{212}(s)R_{222}(s)^{-1}R_{112}(s)^\top+R_{112}(s)R_{222}(s)^{-1}R_{212}(s)^\top-R_{111}(s)\bigg]^{-1}\\
&\quad\times\bigg[\big(B_1(s)^\top-R_{212}(s)R_{222}(s)^{-1}B_2(s)^\top\big)P_1(s)-R_{112}(s)R_{222}(s)^{-1}B_2(s)^\top P_2(s)\bigg]\\
&+P_2(s)\bigg[B_1(s)^\top-R_{212}(s)R_{222}(s)^{-1}B_2(s)^\top\bigg]^\top\\
&\quad\times\bigg[R_{212}(s)R_{222}(s)^{-1}R_{112}(s)^\top+R_{112}(s)R_{222}(s)^{-1}R_{212}(s)^\top-R_{111}(s)\bigg]^{-1}\\
&\quad\times\bigg[\big(B_1(s)^\top-R_{212}(s)R_{222}(s)^{-1}B_2(s)^\top\big)P_1(s)-R_{112}(s)R_{222}(s)^{-1}B_2(s)^\top P_2(s)\bigg]\\
&+\bigg[\big(B_1(s)^\top-R_{212}(s)R_{222}(s)^{-1}B_2(s)^\top\big)P_1(s)-R_{112}(s)R_{222}(s)^{-1}B_2(s)^\top P_2(s)\bigg]^\top\\
&\quad\times\bigg[R_{212}(s)R_{222}(s)^{-1}R_{112}(s)^\top+R_{112}(s)R_{222}(s)^{-1}R_{212}(s)^\top-R_{111}(s)\bigg]^{-1}\\
&\quad\times\bigg[B_1(s)^\top-R_{212}(s)R_{222}(s)^{-1}B_2(s)^\top\bigg]P_2(s)+C(s)^\top P_2(s)C(s)+Q_2(s)\\
&+P_2(s)A(s)+A(s)^\top P_2(s)-P_2(s)B_2(s)R_{222}(s)^{-1}B_2(s)^\top P_2(s)=0,\quad s \in [t,T],
\end{aligned}
\end{equation}
with $P_2(T)=L_2$.

Finally, the feedback Stackelberg equilibrium in this case is
\begin{equation*}
\begin{aligned}
&u^*(s,x)=T_1(s,x,P_1(s)x,P_2(s)x,P_1(s),P_2(s))\\
&=\bigg[R_{212}(s)R_{222}(s)^{-1}R_{112}(s)^\top+R_{112}(s)R_{222}(s)^{-1}R_{212}(s)^\top-R_{111}(s)\bigg]^{-1}\\
&\quad\times\bigg[B_1(s)^\top P_1(s)-R_{212}(s)R_{222}(s)^{-1}B_2(s)^\top P_1(s)-R_{112}(s)R_{222}(s)^{-1}B_2(s)^\top P_2(s)\bigg]x,\\
&v^*(s,x,u^*(s,x))=T_2(s,x,u^*(s,x),P_2(s)x,P_2(s))\\
&=-R_{222}(s)^{-1}R_{212}(s)^\top u^*(s,x)-R_{222}(s)^{-1}B_2(s)^\top P_2(s)x\\
&=-R_{222}(s)^{-1}R_{212}(s)^\top \bigg[R_{212}(s)R_{222}(s)^{-1}R_{112}(s)^\top+R_{112}(s)R_{222}(s)^{-1}R_{212}(s)^\top\\
&\qquad-R_{111}(s)\bigg]^{-1}\bigg[B_1(s)^\top P_1(s)-R_{212}(s)R_{222}(s)^{-1}B_2(s)^\top P_1(s)\\
&\qquad-R_{112}(s)R_{222}(s)^{-1}B_2(s)^\top P_2(s)\bigg]x-R_{222}(s)^{-1}B_2(s)^\top P_2(s)x,\quad s \in [t,T].
\end{aligned}
\end{equation*}

So far, we have not been able to obtain the solvability of Riccati equations $(\ref{Riccati eq-1})$ and $(\ref{Riccati eq-2})$. However, a special case can be solved. Taking $B_1(\cdot)=0, R_{212}(\cdot)=0$ in $(\ref{i})$ and $(\ref{j})$. In this case, there is only follower's control $v(\cdot)$ in the drift term of $(\ref{i})$. And there is a cross term in the leader's cost functional (In this situation feedback Nash equilibria and feedback Stackelberg equilibria are different, see \cite{BH1984}). The two Riccati equations $(\ref{Riccati eq-1})$ and $(\ref{Riccati eq-2})$ are reduced to:
\begin{equation}\label{Riccati eq-3}
\left\{
\begin{aligned}
&\dot{P}_1(s)+C(s)^\top P_1(s)C(s)+Q_1(s)+P_1(s)A(s)+A(s)^\top P_1(s)\\
&-P_1(s)B_2(s)R_{222}(s)^{-1}B_2(s)^\top P_2(s)-P_2(s)B_2(s)R_{222}(s)^{-1}B_2(s)^\top P_1(s),\\
&-P_2(s)B_2(s)R_{222}(s)^{-1}R_{112}(s)^\top R_{111}(s)^{-1}R_{112}(s)R_{222}(s)^{-1}B_2(s)^\top P_2(s)=0,\\
&R_{111}(s)>0,\quad s \in [t,T],\\
&P_1(T)=L_1,
\end{aligned}
\right.
\end{equation}
\begin{equation}\label{Riccati eq-4}
\left\{
\begin{aligned}
&\dot{P}_2(s)+C(s)^\top P_2(s)C(s)+Q_2(s)+P_2(s)A(s)+A(s)^\top P_2(s)\\
&-P_2(s)B_2(s)R_{222}(s)^{-1}B_2(s)^\top P_2(s)=0,\\
&R_{222}(s)>0,\quad s \in [t,T],\\
&P_2(T)=L_2.
\end{aligned}
\right.
\end{equation}
According to Theorem 7.2, Chapter 6 of Yong and Zhou \cite{yong1999stochastic}, we know that if $R_{222}(s)\gg 0, Q_2(s)\ge 0,s\in [t,T],L_2\ge 0$ and $R_{222}\in C([t,T];\mathcal{S}^{m_2})$, then the Riccati equation $(\ref{Riccati eq-4})$ admits a unique solution over $[t,T]$. And $R_{222}(s)\gg 0$ means $R_{222}(s)\ge \delta I, a.e. s \in [t,T]$ for some $\delta>0.$ Let $R_{111}(s)>0, s\in[t,T],$ and $(\ref{Riccati eq-4})$ admits a unique solution $P_2(\cdot)\in C([t,T];\mathcal{S}^{n})$. Since $(\ref{Riccati eq-3})$ is a linear {\it ordinary differential equation} (ODE, in short) with bounded coefficients, it follows that it has a unique solution $P_1(\cdot)$.

\begin{myrem}
Since the analytical solutions to the two Riccati equations $(\ref{Riccati eq-1})$ and $(\ref{Riccati eq-2})$ are difficult to discuss in general, we further discuss their numerical solutions with the certain particular coefficients. We are only considering one dimensional case here.

Let $A(s)=1, B_1(s)=1, B_2(s)=0.001, C(s)=1, Q_1(s)=1, Q_2(s)=1, R_{111}(s)=1, R_{112}(s)=1, R_{212}(s)=0.001, R_{222}(s)=1$, for $s\in[t,T]$, $L_1=1, L_2=2$, and $t=0,T=1$. Then $(\ref{Riccati eq-1})$ and $(\ref{Riccati eq-2})$ become
\begin{equation}\label{Riccati eq-5}
\left\{
\begin{aligned}
&\dot{P}_1(s)={0.998}^{-1}\big(0.999999P_1(s)-0.001P_2(s)\big)^2 +2\times 10^{-6}P_1(s)P_2(s)\\
&\qquad\quad -3P_1(s)-1,\quad s \in [0,1],\\
&P_1(1)=1,
\end{aligned}
\right.
\end{equation}
\begin{equation}\label{Riccati eq-6}
\left\{
\begin{aligned}
&\dot{P}_2(s)={0.998}^{-2}\times 10^{-6}(0.999999P_1(s)-0.001P_2(s))^2\\
&\quad\qquad +2\times 0.999999\times {0.998}^{-1}P_2(s) (0.999999P_1(s)-0.001P_2(s))\\
&\quad\qquad+10^{-6}P_2(s)^2-3P_2(s)-1,\quad s \in [0,1],\\
&P_2(1)=2,
\end{aligned}
\right.
\end{equation}
respectively. By making the time reversing transformation
\[r=1-s,\quad s \in [0,1],\]
$(\ref{Riccati eq-5})$ and $(\ref{Riccati eq-6})$ are equivalent to
\begin{equation}\label{Riccati eq-7}
\left\{
\begin{aligned}
&\dot{P}_1(r)=-\big[{0.998}^{-1}(0.999999P_1(r)-0.001P_2(r))^2 +2\times 10^{-6}P_1(r)P_2(r)\\
&\qquad\qquad -3P_1(r)-1\big],\quad r \in [0,1],\\
&P_1(0)=1,
\end{aligned}
\right.
\end{equation}
\begin{equation}\label{Riccati eq-8}
\left\{
\begin{aligned}
&\dot{P}_2(r)=-\big[{0.998}^{-2}\times 10^{-6}\big(0.999999P_1(r)-0.001P_2(r)\big)^2\\
&\quad\qquad\quad +2\times 0.999999\times {0.998}^{-1}P_2(r)\big(0.999999P_1(r)-0.001P_2(r)\big)\\
&\quad\qquad\quad +10^{-6}P_2(r)^2-3P_2(r)-1\big],\quad r \in [0,1],\\
&P_2(0)=2,
\end{aligned}
\right.
\end{equation}
respectively. We give some numerical simulation and plot a figure.
\begin{figure}[H]
  \centering
  \includegraphics[height=9cm,width=14cm]{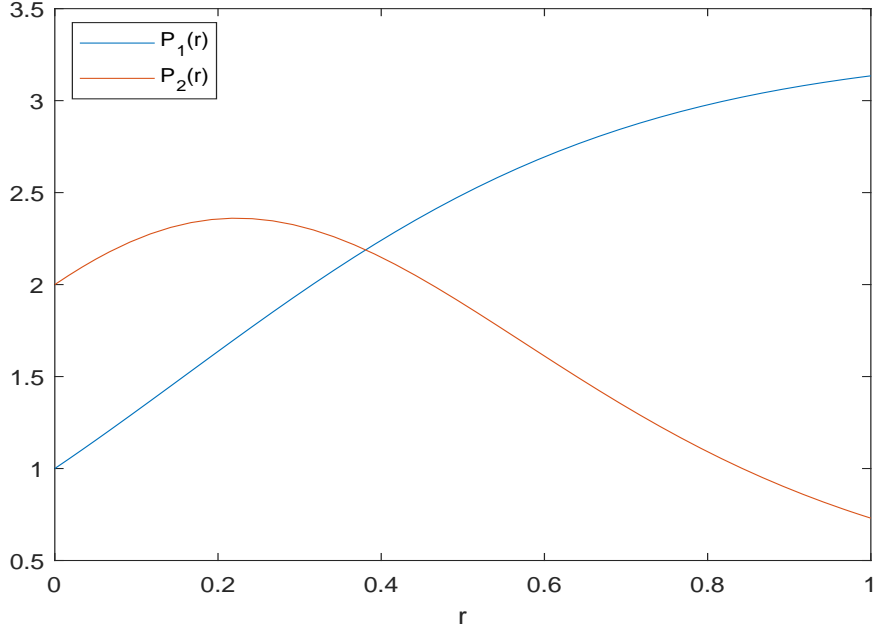}
  \caption{the numerical solutions to Riccati equations (\ref{Riccati eq-7}) and (\ref{Riccati eq-8})}
\end{figure}
\end{myrem}

\subsection{Case 2}\label{Case 2}
Let $b(\cdot)=0,B_2(\cdot)=0,C(\cdot)=0,D_1(\cdot)=0,\lambda(\cdot)=0,M_{11}(\cdot)=0,M_{12}(\cdot)=0,M_{21}(\cdot)=0,M_{22}(\cdot)=0,R_{122}(\cdot)=0,R_{211}(\cdot)=0,R_{212}(\cdot)=0,q_1(\cdot)=0,
\rho_{11}(\cdot)=0,\rho_{12}(\cdot)=0,q_2(\cdot)=0,\rho_{21}(\cdot)=0,\rho_{22}(\cdot)=0,N_1=0,N_2=0$ in $(\ref{LQ state SDE-1})$ and $(\ref{LQ cost functionals-leader and follower})$. Therefore, the state equation and cost functionals are of the following form:
\begin{equation}
\left\{
\begin{aligned}
&dx(s)=\big[A(s)x(s)+B_1(s)u(s)\big]ds+D_2(s)v(s)dW(s),\quad s \in [t,T],\\
&x(t)=x,
\end{aligned}
\right.
\end{equation}
and
\begin{equation}
\begin{aligned}
J_1(t,x;u(\cdot),v(\cdot))=\frac{1}{2}\mathbb{E}&\bigg\{\int_t^T\big[x(s)^\top Q_1(s)x(s)+u(s)^\top R_{111}(s)u(s)+2u(s)^\top R_{112}(s)v(s)\big]ds\\
&\quad+\langle L_1 x(T),x(T)\rangle\bigg\},\\
J_2(t,x;u(\cdot),v(\cdot))=\frac{1}{2}\mathbb{E}&\bigg\{\int_t^T\big[x(s)^\top Q_2(s)x(s)+v(s)^\top R_{222}(s)v(s)\big]ds+\langle L_2 x(T),x(T)\rangle\bigg\}.
\end{aligned}
\end{equation}
Following the calculation steps as in Case $\ref{Case 1}$, by assuming
\begin{equation*}
\left\{
\begin{aligned}
&R_{111}(s)>0,\\
&D_2(s)^\top P_2(s)D_2(s)+R_{222}(s)>0,\quad s \in [t,T],
\end{aligned}
\right.
\end{equation*}
we obtain the corresponding Riccati equations:
\begin{equation}\label{Riccati eq-9}
\left\{
\begin{aligned}
&\dot{P}_1(s)+P_1(s)A(s)+A(s)^\top P_1(s)+Q_1(s)\\
&\quad -P_1(s)B_1(s)R_{111}(s)^{-1}B_1(s)^\top P_1(s)=0,\quad s\in[t,T],\\
&P_1(T)=L_1,
\end{aligned}
\right.
\end{equation}
\begin{equation}\label{Riccati eq-10}
\left\{
\begin{aligned}
&\dot{P}_2(s)+P_2(s)A(s)+A(s)^\top P_2(s)+Q_2(s)-P_2(s)B_1(s)R_{111}(s)^{-1}B_1(s)^\top P_1(s)\\
&\qquad-P_1(s)B_1(s)R_{111}(s)^{-1}B_1(s)^\top P_2(s)=0,\quad s\in[t,T],\\
&P_2(T)=L_2.
\end{aligned}
\right.
\end{equation}
According again Theorem 7.2, Chapter 6 of \cite{yong1999stochastic}, we know that if $R_{111}(s)\gg 0, Q_1(s)\ge 0,s\in [t,T],L_1\ge 0$ and $R_{111}\in C([t,T];\mathcal{S}^{m_1})$, then the Riccati equation $(\ref{Riccati eq-9})$ admits a unique solution over $[t,T]$. In the same way, if $D_2\in C([t,T];\mathbb{R}^{n\times m_2}),R_{222}\in C([t,T];\mathcal{S}^{m_2}),R_{222}(s)\gg 0, Q_2(s)\ge 0,s\in [t,T],L_2\ge 0$ and $(\ref{Riccati eq-9})$ admits a unique solution $P_1(\cdot)\in C([t,T];\mathcal{S}^{n})$, then the Riccati equation $(\ref{Riccati eq-10})$ admits a unique solution over $[t,T]$.

Finally, the feedback Stackelberg equilibrium in this case is
\begin{equation*}
\begin{aligned}
u^*(s,x)&=T_1(s,x,P_1(s)x,P_2(s)x,P_1(s),P_2(s))\\
        &=-R_{111}(s)^{-1}B_1(s)^TP_1(s)x,\\
v^*(s,x,u^*(s,x))&=T_2(s,x,u^*(s,x),P_2(s)x,P_2(s))=0,\quad s \in [t,T].
\end{aligned}
\end{equation*}

\section{Concluding Remarks}\label{Section 5}

Different from \cite{BCS2014}, we consider a finite-horizon Stackelberg stochastic differential game where both the drift term and diffusion term of the state equation contain the leader's and the follower's control variables. Due to the finite horizon feature, the verification theorem of the feedback Stackelberg equilibrium consists of parabolic PDEs. An LQ problem is further researched. We obtained the representation of the feedback Stackelberg equilibrium in two special cases, via related Riccati equations. We discuss the analytical and numerical solutions to the Riccati equations in some special cases. The general solvability of the corresponding Riccati equations requires systematic study. We will consider this topic in the future research.


\end{document}